\documentclass[J,12pt]{amsart}
\usepackage{epsfig,epsf,amsmath,amssymb}
\newtheorem{theorem}{Theorem}[section]
\newtheorem{corollary}[theorem]{Corollary}
\newtheorem{definition}{Definition}
\newtheorem{example}[theorem]{Example}
\newtheorem{proposition}[theorem]{Proposition}
\newtheorem{remark}[theorem]{Remark}
\newtheorem{lemma}[theorem]{Lemma}

\begin{document}

\author[Sang-Eon Han]{Sang-Eon~Han}

\title[]{Algebraic structures on digital objects}
\thanks{2010 {\it Mathematics Subject Classification}: 22A05,22A10,54C08, 54H11,68U10}\keywords{Khalimsky topology, $DT$-$k$-ring, $DT$-$k$-filed, pseudo-$DT$-$k$-ring, pseudo-$DT$-$k$-filed, $DT$-$k$-isomorphism}

\maketitle

\begin{center}
		{\it Department of Mathematics Education,
		Jeonbuk National University, 567 Baekje-daero, deokjin-gu, Jeonju-si, Jeollabuk-do 54896, Republic of Korea.\\
	E-mail address: sehan@jbnu.ac.kr	} 
	
\end{center}

\begin{abstract}
 The paper aims to introduce a digital-topological ($DT$-, for brevity) $k$-ring and a $DT$-$k$-field.
 They are indeed endowed with both a digital image (or digital object) $(X, k)$
 and a ring structure or a field structure $(X, \ast_1, \star)$, where $X \subset {\mathbb Z}^n$ and the $k$-adjacency is the digital $k$-connectivity of  ${\mathbb Z}^n$.
 Besides, some properties of them are investigated.
 The ring $(SC_k^{n,l}, \ast_1, \star)$ is proved to be isomorphic to the ring $({\mathbb Z}_l, +, \cdot)$,
  where $SC_k^{n, l}$ is a simple $k$-cycle with $l$ elements in ${\mathbb Z}^n$, $n\in {\mathbb N}\setminus \{1\}$, and ${\mathbb N}$ is the set of natural numbers.
  However, $(SC_k^{n,l}, \ast_1, \star)$ is proved not to be a $DT$-$k$-ring.
     Meanwhile, we prove that for $l \in \mathcal{P} \setminus \{2,3\}$, while $(SC_k^{n, l}, \ast_1, \star)$ is a field, it cannot be a $DT$-$k$-field,
    where $\mathcal{P}$ indicates the set of prime numbers.
  Besides, the paper proves that the field $(X:=\{-1, 0, 1\}, \ast_1, \star)$ is a $DT$-$2$-field derived from the digital image $(X, 2)$ and the field $(X:=\{-1, 0, 1\}, \ast_1, \star)$,
   and further,
  $(Y:=\{0, 1\}, \ast_1, \star)$ is also a $DT$-$2$-field derived from the digital image $(Y, 2)$ and the field $(Y:=\{0, 1\}, \ast_1, \star)$.
  Furthermore, we prove that the $n$-dimensional digital cube $(X^n,\ast_1, \star)$ is a $DT$-$2n$-field and  $(Y^n,\ast_1, \star)$ is also a $DT$-$2n$-field.
  In addition,  given any digital cube with $2n$-adjacency on ${\mathbb Z}^n$, e.g., 
  $([a, a+2]_{\mathbb Z})^n$, after transforming it onto $([-1,1]_{\mathbb Z}^n, 2n)$ by using a $2n$-isomorphism (or a digital topological $2n$-embedding), the transformed digital image can also be a $DT$-$2n$-field.   
  Finally, the paper introduces the notions of a pseudo $DT$-$k$-ring and a pseudo $DT$-$k$-field which are weaker than those of a $DT$-$k$-ring and a $DT$-$k$-field, respectively.
 \end{abstract}

\thanks {AMS Classification: 22A05, 22A10, 54C08, 54H11, 68U05, 68U10\\
	Keywords: Khalimsky topology, $DT$-$k$-ring,  $DT$-$k$-field, pseudo $DT$-$k$-ring, pseudo $DT$-$k$-field, $DT$-$k$-group isomorphism.}


\maketitle

\newpage

 \section{\bf{Introduction}}\label{s1}

 In digital and applied topology associated with computer science, we have the following interesting query:
 What kinds of both algebraic- and topological-jointed structures on digital objects can be considered\,?
 In the case that we formulate the structures,
 they can facilitate many studies of digital objects from the viewpoints of applied topology as well as digital topology, digital geometry, computer science, and so on.
 To address this topic, motivated by the typical topological group \cite{A1,T1},
  a recent paper \cite{H7} proposed the so-called digital-topological ($DT$-, for brevity) $k$-group endowed with both a digital image $(X,k), X \subset {\mathbb Z}^n$ (see Section 2) and a group $(X, \ast_1)$. 
  To do this work, the paper \cite{H7} used a special kind of adjacency relation on $X^2:=X\times X$ which was motivated by the graph adjacency of a Cartesian product in \cite{B1,H3,H4,H9,Sa1}. 	
 Besides, it investigated many properties of a $DT$-$k$-group structure.
 The recent paper \cite{H8} also investigated several types of $DT$-$k$-subgroup structures.
 Furthermore, it proved that each of the $n$-dimensional Khalimsky topological space and the Marcus-Wyse topological plane cannot be a typical topological group. \\
  
  Based on a $DT$-$k$-group structure, after establishing  both an abelian group $(X, \ast_1)$ and a commutative semigroup  $(X, \star)$ in which the operation $\star$ is associative and distributive over the operation  $\ast_1$,
  we first establish a commutative  $DT$-$k$-ring (see Section 4).
  Furthermore,
  the paper will initially propose a $DT$-$k$-field (see Section 5) structure based on the digital image $(X, k)$, abelian group $(X, \ast_1)$, and a commutative semigroup $(X, \star)$ (see Definition 7). In the paper we will denote it by $(X,k,\ast_1,\star)$.
  Indeed, while a $DT$-$k$-field implies a commutative $DT$-$k$-ring (compare Definitions 5 and 7), the converse does not hold. \\

   A $G_{k^\ast}$-adjacency of a digital product $X^2$ will be used for constructing a $DT$-$k$-ring and a $DT$-$k$-field, (see Definition 1 in the paper).
  Indeed, the  $G_{k^\ast}$-adjacency was proved to support the $(G_{k^\ast}, k)$-continuity of a multiplication
 $\alpha: (X^2, G_{k^\ast}) \to (X, k)$ \cite{H7} (see Definition 3). We note that the  
 $G_{k^\ast}$-adjacency  need not be equal (or equivalent) to one of the typical $k$-adjacency relations of ${\mathbb Z}^{2n}$ \cite{H7} 
 (in detail, see Section 3 in the present paper), where $\{k:=k(t, 2n)\,\vert \, t \in [1, 2n]_{\mathbb Z}\}$ as in (2.1), (see also Lemma 3.1 in the present paper).
  Besides, it is also essential to formulating both a  $DT$-$k$-ring and a $DT$-$k$-field.
     In addition, for $\{s, t\} \subset \mathbb Z$ with $s \lneq t$, the notation $[s, t]_{\mathbb Z}$ is often used to indicate the set 
      $\{x \in {\mathbb Z} \,\vert \, s\leq x \leq t\}$.\\

Since each of the $DT$-$k$-ring and  $DT$-$k$-field structure is richer than a $DT$-$k$-group structure, we may raise the following query.\\
 $\bullet$ How to establish a $DT$-$k$-ring on  a digital object $(X, k)$\,?\\
    After developing the notion of a $DT$-$k$-ring, the following queries have also arisen.\\ Hereinafter, let $SC_k^{n,l}$ be a simple closed $k$-curve with $l$ elements in ${\mathbb Z}^n$ (see Section 2 in detail).\\
 $\bullet$ Do the $DT$-$2n$-group $({\mathbb Z}^n, 2n, \ast_1)$ and $DT$-$k$-group $(SC_k^{n,l}, \ast_1)$ introduced in \cite{H7} become a $DT$-$2n$-ring and a $DT$-$k$-ring, respectively\,?\\
       $\bullet$ What are some properties of a $DT$-$k$-ring\,?\\
      $\bullet$ How to establish a $DT$-$k$-field structure derived from both a digital image $(X, k)$ and a field $(X, \ast_1, \star)$\,?\\
     $\bullet$ Is there a  $DT$-$k$-field structure on $SC_k^{n, l}$\,?\\
      $\bullet$ What kinds of  $DT$-$k$-fields exist\,?\\
      $\bullet$ What kinds of structures can be placed between a $DT$-$k$-group and a $DT$-$k$-ring and further,  a $DT$-$k$-ring and a $DT$-$k$-field\,?\\
    After developing several new notions, we will address these topics.\\

 The frame of the paper is shown as follows:
Section 2 deals with some terminology needed for the study in the paper.
In Section 3, given a digital image $(X, k), X \subset {\mathbb Z}^n$, some properties of the $G_{k^\ast}$-adjacency of the digital product $X^2$
and  the $(G_{k^\ast}, k)$-continuity of a map from $(X^2, G_{k^\ast})$ to $(X, k)$  are mentioned.
Section 4 develops a ring structure on $SC_k^{n, l}$.
Besides, it proposes the notion of  a $DT$-$k$-ring and proves that the $DT$-$2n$-group $({\mathbb Z}^n, 2n, \ast_1)$ is not a  $DT$-$2n$-ring  and the $DT$-$k$-group $(SC_k^{n,l}, \ast_1)$ is not a $DT$-$k$-ring either.
Section 5 proposes a field structure on $SC_k^{n, l}$, where $l \in \mathcal{P} \setminus \{2,3\}$,
where $ \mathcal{P}$ is the set of prime numbers.
For $l\in {\mathbb N}\setminus \{2,3\}$, the ring $(SC_k^{n,l}, \ast_1, \star)$ is proved to be isomorphic to the typical ring $({\mathbb Z}_l, +, \cdot)$, where ${\mathbb Z}_l={\mathbb Z}/l{\mathbb Z}:=[0, l-1]_{\mathbb Z}$
      and the operations $\lq\lq$$+"$ and $\lq\lq$$\cdot"$ means the operations $\lq\lq$$+(mod\, l)"$ and $\lq\lq$$\cdot(mod\, l)"$ in the paper, respectively.
    After that, we will introduce a $DT$-$k$-field and prove that while $(SC_k^{n, l}, \ast_1, \star)$ is a field, it is neither a  $DT$-$k$-ring nor a  $DT$-$k$-field, where $l \in \mathcal{P} \setminus \{2,3\}$.
           However, the paper shows that the field $(X:=\{-1, 0, 1\}, \ast_1, \star)$ is a $DT$-$2$-field derived from the digital image $(X, 2)$ and the field $(X:=\{-1, 0, 1\}, \ast_1, \star)$,
   and
  $(Y:=\{0, 1\}, \ast_1, \star)$ is also a $DT$-$2$-field endowed with the digital image $(Y, 2)$ and the field $(Y:=\{0, 1\}, \ast_1, \star)$.
  Furthermore, the $n$-dimensional digital cube $(X^n,\ast_1, \star)$ is a $DT$-$2n$-field and  $(Y^n,\ast_1, \star)$ is also a $DT$-$2n$-field.
  In addition,  given any digital cube with $2n$-adjacency on ${\mathbb Z}^n$, using a $2n$-isomorphism (or digital topological $2n$-embedding \cite{H6}), we can transform it as a $DT$-$2n$-field.     
      Section 6 introduces the notions of a pseudo $DT$-$k$-group, a pseudo $DT$-$k$-ring, and a pseudo $DT$-$k$-field. Section 7  refers to some remarks and  further work.\\
      
   In the paper, only nonempty and $k$-connected digital images $(X, k)$ are considered.
    Besides, given a set $X$, we will use the notation $\sharp X$ to indicate the cardinality of the given set $X$ and the notation $\lq\lq$$:=$" will be used for introducing a new term.

\section{\bf{Some terminology for studying digital images}}\label{s2}

In order to formulate the structure of a $DT$-$k$-ring (resp. $DT$-$k$-field) on a digital object $(X, k)$ with a ring (resp. field) $(X, \ast_1, \star)$,
the  digital $k$-connectivities of  ${\mathbb Z}^n, n \in {\mathbb N}$, referred to in (2.1) below, are strongly required (see Sections 3, 4, and 5).
The $k$-adjacencies of ${\mathbb Z}^n, n \in [1,3]_{\mathbb Z}$, were initially developed by Rosenfeld \cite{R1,R2}.
 As a generalization of them, the papers \cite{H1,H5} establish the following: for $t \in [1,n]_{\mathbb Z}$,
 the distinct points in  ${\mathbb Z}^n$
 $$p = (p_i)_{i \in [1,n]_{\mathbb Z}} \,\, \text{and}\,\, q = (q_i)_{i \in [1,n]_{\mathbb Z}}$$
 are said to be $k(t, n)$-adjacent if at most $t$ of their coordinates  differ by $\pm1$ and the others coincide.
  According to this statement, the $k(t, n)$-adjacency relations of ${\mathbb Z}^n, n \in {\mathbb N}$, were formulated in \cite{H1,H5}, as follows:
    $$k:=k(t, n)=\sum_{i=1}^{t} 2^{i}C_{i}^n, \text{where}\,\, C_i ^n= {n!\over (n-i)!\ i!}. \eqno(2.1)$$
    For instance,
$$(n, t, k) \in \left \{
\aligned
& (4, 4, 80), (4, 3, 64), (4, 2, 32), (4, 1, 8),\\
& (5, 5, 242),  (5, 4, 210),  (5, 3, 130), (5, 2, 50), (5, 1, 10),  \,\text{and}\\
&  (6, 6, 728), (6, 5, 664), (6, 4, 472),  (6, 3, 232),  (6, 2, 72), (6, 1, 12).
\endaligned
\right\} \eqno(2.2)
$$

For a set $X \subset {\mathbb Z}^n, n \in {\mathbb N}$, with one of the $k$-adjacency relations of (2.1), hereinafter, we call $(X, k)$ a digital image (or digital object).

Given $(X, k)$, it is said to be $k$-path connected (or $k$-connected)
if for any distinct points $x, y \in X$, there exists a finite sequence $(x_i)_{i \in [0, n]_{\mathbb Z}} \subset X$ such that $x_0=x$, $x_n=y$, and $x_i$ is  $k$-adjacent to $x_{i+1}$, $i \in [0,n-1]_{\mathbb Z}$ \cite{KR1}.
 Also, a digital image  $(X, k)$ with a singleton is assumed to be $k$-connected for any $k$-adjacency.
A simple closed $k$-curve (or simple $k$-cycle, for brevity) with $l$ elements in ${\mathbb Z}^n$, denoted by $SC_k^{n,l}, 4 \leq l \in {\mathbb N}$ \cite{H1,H5,H6,KR1},
was initially defined as a sequence $(x_i)_{i \in [0, l-1]_{\mathbb Z}}$ on ${\mathbb Z}^n$, where
$x_i$ and $x_j$ are $k$-adjacent if and only if $\vert i-j\vert=\pm1(mod\, l)$ \cite{KR1}. The number $l$ of $SC_k^{n,l}$ depends on both the digital $k$-adjacency and the dimension $n$ (see (5) of \cite{H6}).

A digital $k$-neighborhood of the point $x_0$ with radius $1$ \cite{H1,H2} can be represented as follows:\\
$$N_k(x_0, 1)=\{x \in X \,\vert\,\,x\,\,\text{is}\,\,k\text{-adjacent to}\,\,x_0\} \cup \{x_0\}. \eqno(2.3) $$

This $k$-neighborhood will be strongly used for formulating a  $G_{k^\ast}$-adjacency of a digital product in Section 3.
The digital continuity of \cite{R2} can be represented by using a digital $k$-neighborhood in (2.3) as in Proposition 2.1. Recall that the paper \cite{R2} characterized 
the $(k_0, k_1)$-continuity of the map $f: (X,k_0) \to (Y, k_1)$ as follows:
A function $f: (X,k_0) \to (Y, k_1)$ is $(k_0, k_1)$-continuous if and only if  two points $x$ and $x^\prime$ which are $k_0$-adjacent in $(X,k_0)$ implies that either $f(x)=f(x^\prime)$ or $f(x)$ is $k_1$-adjacent to $f(x^\prime)$ (see Theorems 2.1 and 2.4 of \cite{R2}). 
Indeed, it is clear that this approach can be represented by using a digital $k$-neighborhood of (2.3), which can be effective to study $DT$-agebraic structures later (see Sections 3, 4, 5, and 6).

\begin{proposition} \cite{H1,H5,R2}
Assume $X \subset {\mathbb Z}^{n_0}$ and $Y \subset {\mathbb Z}^{n_1}$.
A mapping $f: (X,k_0) \to (Y, k_1)$ is $(k_0, k_1)$-continuous
if and only if for each $x\in X$, $f(N_{k_0}(x, 1))\subset  N_{k_1}(f(x), 1)$.
\end{proposition}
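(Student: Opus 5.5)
The plan is to take as the working definition the characterization recalled just before the statement: $f$ is $(k_0,k_1)$-continuous if and only if, whenever $x$ and $x'$ are $k_0$-adjacent in $X$, either $f(x)=f(x')$ or $f(x)$ is $k_1$-adjacent to $f(x')$ (Theorems 2.1 and 2.4 of \cite{R2}). The proposition then reduces to translating this adjacency condition into the language of the neighborhoods $N_k(x,1)$ of (2.3). I will prove the two implications separately, unwinding (2.3) each time.

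\emph{Necessity.} Assume $f$ is $(k_0,k_1)$-continuous and fix $x\in X$. I take an arbitrary $x'\in N_{k_0}(x,1)$ and split according to (2.3).
\begin{itemize}
\item If $x'=x$, then $f(x')=f(x)\in N_{k_1}(f(x),1)$, since (2.3) always puts the center in the neighborhood.
\item Otherwise $x'$ is $k_0$-adjacent to $x$. By the characterization, either $f(x')=f(x)$ or $f(x')$ is $k_1$-adjacent to $f(x)$. In both cases $f(x')\in N_{k_1}(f(x),1)$, again by (2.3) applied in $Y$.
\end{itemize}
Hence $f(N_{k_0}(x,1))\subset N_{k_1}(f(x),1)$.

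\emph{Sufficiency.} Assume the inclusion holds for every $x\in X$, and let $x,x'$ be $k_0$-adjacent. Then $x'\in N_{k_0}(x,1)$, so $f(x')\in N_{k_1}(f(x),1)$. By (2.3), this means either $f(x')=f(x)$ or $f(x')$ is $k_1$-adjacent to $f(x)$. This is exactly Rosenfeld's condition, so $f$ is $(k_0,k_1)$-continuous.

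There is no real obstacle here. The only point needing care is that $N_k(x,1)$ contains its center $x$. This is what lets the alternative ``$f(x)=f(x')$'' in Rosenfeld's condition be absorbed into neighborhood membership. Without it, the equivalence would fail for maps that collapse adjacent points.
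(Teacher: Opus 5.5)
Your proof is correct and takes the same route as the paper. The paper offers no argument beyond noting that Rosenfeld's adjacency characterization (adjacent points go to equal or adjacent points) is ``clearly'' a restatement via the neighborhoods (2.3). You write out both inclusions explicitly and correctly note that the center belonging to $N_{k_1}(f(x),1)$ is what absorbs the case $f(x)=f(x')$.
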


In Proposition 2.1, in the case that  $n_0 = n_1$ and $k_0 = k_1$, we say that it is {\it $k_0$-continuous}.\\

In addition, a $(k_0, k_1)$-homeomorphism \cite{B2} or a $(k_0,k_1)$-isomorphism (or $k$-isomorphism)\cite{H2,HP2} was also used for classifying digital images.
Naively, given objects $(Z, k_0)$ and $(W, k_1)$,  $h :Z \to W$ is said to be a $(k_0, k_1)$-isomorphism if  $h$
is a  $(k_0, k_1)$-continuous bijection and further, $h^{-1}: W \to Z$ is $(k_1, k_0)$-continuous.

\section{\bf The $(G_{k^\ast}, k)$-continuity supporting a $DT$-$k$-ring and $DT$-$k$-field structure}\label{s3}

   Assume  $(X_i, k_i), X_i \subset {\mathbb Z}^{n_i}, i \in \{1, 2\}$. Then 
  the recent paper \cite{H7} introduced the notion of a $G_{k^\ast}$-adjacency relation on the Cartesian product $X_1\times X_2$ to obtain a relation set (or digital space)
  $(X_1\times X_2, G_{k^\ast})$ and the $(G_{k^\ast}, k_i)$-continuities of maps $f:(X_1\times X_2, G_{k^\ast}) \to (X_i, k_i), X_i \subset {\mathbb Z}^{n_i}, i \in \{1, 2\}$.  
 After that, various properties of a $G_{k^\ast}$-adjacency were also investigated in \cite{H7}.
 However, the present paper focuses on developing various algebraic structures based on a $DT$-$k$-group structure such as a $DT$-$k$-ring, $DT$-$k$-fied, and so on.
 Hence as a special case of $(X_1\times X_2, G_{k^\ast})$ in Definition 4.1 of \cite{H7}, we only need the relation set $(X^2, G_{k^\ast})$ induced by 
  $(X, k), X \subset {\mathbb Z}^n$.
    Let us first recall the notion of a $G_{k^\ast}$-adjacency relation on the Cartesian product $X^2:=X \times X \subset {\mathbb Z}^{2n}$ (see Definition 1 below) and the 
  $(G_{k^\ast}, k)$-continuity of a map 
  $$f:(X^2, G_{k^\ast}) \to (X, k)\,\,\text{(see Definition 3 below)}.$$
   This notion will strongly be used for establishing both an abelian $DT$-$k$-group, a commutative $DT$-$k$-ring, and a $DT$-$k$-field in Sections 4 and 5.
As a special case of Definition 4.1 of \cite{H7}, the following can be considered.

\begin{definition}\cite{H7} 
		Assume  $(X, k:=k(t, n)), X \subset {\mathbb Z}^n$.
	Consider distinct points $p:=(x_1, x_2), q:=(x_1^{\prime}, x_2^{\prime})  \in X^2 \subset {\mathbb Z}^{2n}$ satisfying the condition
	$$q \in (N_k(x_1, 1) \times \{x_2\})\cup (\{x_1\} \times N_k(x_2, 1)). \eqno(3.1)$$
	Then we say that $p$ is related to $q$ on $X^2$.			 	
	After that, as a convenience, taking only the $k(t, 2n)$-adjacency of ${\mathbb Z}^{2n}$ and just putting $k^\ast:=k(t, 2n)$, we say that the relation between $p$ and $q$ is $G_{k^\ast}$-adjacent on $X^2$ derived from the given
	$(X, k:=k(t, n))$.
			\end{definition}
		
		As stated in Definition 1, we carefully note that the number $t$ of $k:=k(t,n)$ is exactly equal to the number $t$ in the $k^\ast:=k(t,2n)$ for the $G_{k^\ast}$-adjacency.
	
	Based on Definition 1,  in the case of $X^2={\mathbb Z}^{2n}$, it is clear that a $G_{k^\ast}$-adjacency on $X^2$ is equal to the $k(1,2n)$-adjacency on $X^2$, where 
	$k^\ast=k(1, 2n)$ which is equal to the $2n$-adjacency on $X^2$.
	However,  in the case of $X^2 \neq {\mathbb Z}^{2n}$,  a $G_{k^\ast}$-adjacency on $X^2 \subsetneq {\mathbb Z}^{2n}$ has its own features, i.e.,  a $G_{k^\ast}$-adjacency on $X^2$ need not be equal to the $k^{\ast}:=k(1, 2n)$-adjacency on $X^2$. Besides, it need not be equal to certain $k$-adjacency of (2.1), as follows:
	
	\begin{lemma}
		(1) A $G_{k^\ast}$-adjacency need not be equal to one of the $k$-adjacency relations of ${\mathbb Z}^{2n}$ stated in (2.1) \cite{H7}
		(see Example 3.2 below) because the $G_{k^\ast}$-adjacency on $X^2$ is defined by using the condition (3.1), i.e., given distinct points $p,q$ of Definition 1 are  $G_{k^\ast}$-adjacent if they satisfy only the condition (3.1). \\		
		(2) A $G_{k^\ast}$-adjacency on $X^2 \subset {\mathbb Z}^{2n}$ is broader  than the $k^\ast$-adjacency of $X^2 \subset {\mathbb Z}^{2n}$ of (2.1) \cite{H7}, where 
		$k^\ast=k(t, 2n)$.
		Namely, for two distinct points $p,q \in X^2$, if $p$ is $G_{k^\ast}$-adjacent to $q$, then they are $k^\ast$-adjacent. However, the converse does not hold. 
		\end{lemma}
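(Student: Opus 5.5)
The plan is to prove part (2) first and then read off part (1) from it.

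\emph{The forward implication in (2).} Let $p=(x_1,x_2)$ and $q=(x_1',x_2')$ be distinct points of $X^2\subset{\mathbb Z}^{2n}$ that are $G_{k^\ast}$-adjacent. By (3.1) we may assume, by symmetry, that $q\in N_k(x_1,1)\times\{x_2\}$. Then $x_2'=x_2$, and $x_1'\neq x_1$ because $p\neq q$. So $x_1'$ is $k(t,n)$-adjacent to $x_1$ in ${\mathbb Z}^n$. By the definition preceding (2.1), at most $t$ coordinates of $x_1$ and $x_1'$ differ, each by $\pm1$, and the rest agree. Regard $p,q$ as points of ${\mathbb Z}^{2n}$. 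Their last $n$ coordinates coincide, so at most $t$ of their $2n$ coordinates differ, each by $\pm1$. Hence $p$ and $q$ are $k(t,2n)=k^\ast$-adjacent. The case $q\in\{x_1\}\times N_k(x_2,1)$ is identical with the two blocks of coordinates swapped.

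\emph{Failure of the converse.} A counterexample is enough. Take $n=1$, $X=[0,1]_{\mathbb Z}$, $k=2=k(1,1)$, so $k^\ast=k(1,2)=4$. In $X^2$ the points $(0,0)$ and $(1,1)$ are not $4$-adjacent, so $t=1$ gives nothing. This shows that $t\ge2$ is needed. So take $n=2$, $X=[0,1]_{\mathbb Z}^2$, $k=8=k(2,2)$, and $k^\ast=k(2,4)=32$. Let $p=((0,0),(0,0))$ and $q=((1,0),(1,0))$. They differ in exactly two of the four coordinates, each by $1$, so they are $32$-adjacent. But $x_1\neq x_1'$ and $x_2\neq x_2'$, so (3.1) fails and $p,q$ are not $G_{k^\ast}$-adjacent. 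In general, for $t\ge2$, moving one coordinate in each factor produces such a pair.

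\emph{Part (1).} The $G_{k^\ast}$-adjacency is a subrelation of $k(t,2n)$-adjacency. It coincides with some $k(s,2n)$ on $X^2$ only in degenerate situations. On $\mathbb Z^{2n}$, as noted before the lemma, it equals $k(1,2n)$ exactly when $t=1$. In the example above, $G_{k^\ast}$ contains the pair $((0,0),(0,0))$, $((1,1),(0,0))$, which differs in two coordinates. So it is not $k(1,4)$. It is also not $k(s,4)$ for any $s\ge2$, since it misses the pair $p,q$ found above, which is $k(s,4)$-adjacent. Hence it is not one of the relations in (2.1).

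The main obstacle is only this precise choice of example, so that it rules out every $s$ at once. Everything else is unwinding the definitions.
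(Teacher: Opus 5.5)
Your proposal is correct and follows essentially the paper's route. The paper simply declares both parts straightforward from Definition~1 and relies on a counterexample of exactly your kind: Example~3.2 uses the points $(y_2,y_2)$ and $(y_1,y_1)$ of $SC_8^{2,6}\times SC_8^{2,6}$, obtained by a one-coordinate step in each factor. You additionally spell out the forward implication, and you check explicitly that your example excludes every $k(s,4)$, which the paper leaves implicit.

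Two of your side remarks need qualification, though the proof does not use either. You say coincidence with some $k(s,2n)$ happens only in ``degenerate'' situations, and that $t\ge 2$ always yields such a pair. But the paper's Example~3.6 has $G_{k^\ast}=k^\ast$ on $SC_8^{2,4}\times SC_8^{2,4}$ with $t=2$, because adjacent points there always differ in two coordinates.
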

	\begin{proof} The proofs of (1) and (2) are straightforward from Definition 1.
		\end{proof}
	
	\begin{example} Let $Y=SC_8^{2,6}:=(y_0, y_1, y_2, y_3, y_4, y_5)$, where $y_0=(0,0), y_1=(1,-1), y_2=(2,-1), y_3=(3,0), y_4=(2,1), y_5=(1,1)$ just an example.
		Let us consider the Cartesian product $Y^2$.
		According to the condition of (3.1), we obtain the $G_{k^\ast}$-adjacency on $Y^2$ and finally 
		a digital space $(Y^2, G_{k^\ast})$ \cite{H7,H10}. Namely,			
		the relation $ G_{k^\ast}$ on $X^2$ is non-reflexive and symmetric \cite{H7,H10}.		
		Then we strongly note that  $G_{k^\ast}\neq k^\ast$, where $k^\ast=k(2,4)=32$ (see (2.1)).\\
		To be specific, consider the two points $p:=(y_2,y_2), q:=(y_1,y_1) \in Y^2$
		Then it is clear that while $q \in N_{k(2,4)}(p, 1)= N_{32}(p, 1)$, $q$ is not $G_{k^\ast}$-adjacent to $p$, where $k^{\ast}=k(2,4)$.
				\end{example}
	To characterize the relation $G_{k^\ast}$ On $X^2$ of Definition 1, including several examples in \cite{H7}, we have the following:\\
	
	\begin{remark} A $G_{k^\ast}$-adjacency need not be equal to a $C_{k^\ast}$-adjacency relation on $X^2 \subset {\mathbb Z}^{2n}$ \cite{H7}.\\		
Given a set  $X^2 \subset {\mathbb Z}^{2n}$, let us compare between a $G_{k^\ast}$-adjacency relation on $X^2$ and a $C_{k^\ast}$-adjacency relation on $X^2$.
To be specific, unlike Definition 1, for any point $p:=(x_1,x_2) \in X^2$, a $C_{k^\ast}$-adjacency relation on $X^2 \subset {\mathbb Z}^{2n}$ \cite{H4} can be represented as a relation on $X^2$ satisfying the condition
$$N_{k^\ast}(p,1)= (N_k(x_1, 1) \times \{x_2\})\cup (\{x_1\} \times N_k(x_2, 1)), \eqno(3.2)$$
where $k^\ast=k(t, 2n)$, and the number $t$ of $k(t, 2n)$ is equal to the number $t$ for 
$k(t,n)$ of $(X, k:=k(t,n))$ and finally $C_{k^\ast}=k^\ast$ (see more detail in \cite{H4}).
Concretely, while a $C_{k^\ast}$-adjacency relation on $X^2$ is one of the $k=k(t,2n)$-adjacencies of  ${\mathbb Z}^{2n}$ as stated in (2.1), a $G_{k^\ast}$-adjacency relation on $X^2$ need not be equal to one of the $k=k(t,2n)$-adjacencies of  ${\mathbb Z}^{2n}, t \in [1,2n]_{\mathbb Z}$, i.e.,  a $G_{k^\ast}$-adjacency relation on $X^2$ is a new relation on $X^2 \subset {\mathbb Z}^{2n}$ (see \cite{H7} for more detail).
Hence we note that 	a $C_{k^\ast}$-adjacency relation on $X^2$ is more rigid than a $G_{k^\ast}$-adjacency relation on $X^2$. Besides, given $(X,k), X \subset {\mathbb Z}^n$,  it is well known that  the set $X^2$ need not have a $C_{k^\ast}$-adjacency relation \cite{H4}.
 	\end{remark}
	
	\begin{remark}  (Advantages of the usage of a $\lq\lq$$G_{k^\ast}$-adjacency" of Definition 1) 
		As for the notation $G_{k^\ast}$ using $k^\ast$ in Definition 1, we need to 
		refer to some benefits of taking the notation because the $G_{k^\ast}$-adjacency has much information identifying the relation of (3.1), as follows: \\	
		Given $(X, k:=k(t, n)), X \subset {\mathbb Z}^{n}$, a $G_{k^\ast}$-adjacency on $X^2$ uniquely and always exists. 
		This $G_{k^\ast}$-adjacency is compared to the $C_{k^\ast}$-adjacency of \cite{H7} (see Definition 3.2 of \cite{H7}). 
		Indeed, a $G_{k^\ast}$-adjacency is clearly broader than a $C_{k^\ast}$-adjacency as stated in Sections 3 and 4 of \cite{H7} (see also the compatible $k^\ast$-adjacency in \cite{H3,H4}). In addition, in detail, see Remark 3.1.
		\end{remark}
				
					We now establish the following $G_{k^\ast}$-neighborhood of a given point of  $X^2$.
				
				\begin{definition} \cite{H7}  Given $(X, k:=k(t, n)), X \subset {\mathbb Z}^{n}$, assume the relation set $(X^2, G_{k^\ast})$.
					For a point $p \in X^2$, we define
					$$N_{G_{k^\ast}}(p):=\{q \in X^2\,\vert \, q \,\,\text{is}\,\,G_{k^\ast}\text{-adjacent to}\,\,p\} \eqno(3.3)$$
					and
					$$N_{G_{k^\ast}}(p, 1):=N_{G_{k^\ast}}(p) \cup \{p\}. \eqno(3.4)$$
					Then $N_{G_{k^\ast}}(p, 1)$ is called a $G_{k^\ast}$-neighborhood of $p$.
				\end{definition}
				
				In view of (3.1) and (3.4), for the relation set $(X^2, G_{k^\ast})$ and a point $p:=(x_1, x_2) \in X^2$, we have the following \cite{H7}:
				$$N_{G_{k^\ast}}(p, 1)= (N_{k}(x_1, 1) \times \{x_2\}) \cup  (\{x_1\}\times N_{k}(x_2, 1)). \eqno(3.5)$$
				
		\begin{remark}
		(1) The choice of the subscript $k^{\ast}$ of $G_{k^\ast}$ can be very helpful to characterize the relation $G_{k^\ast}$. To be specific, 
		as mentioned in Example 3.2, Remarks 3.3 and 3.4, for $p \in X^2$, we always obtain 
		$$N_{G_{k^\ast}}(p,1) \subset N_{k^\ast}(p, 1).\eqno (3.6)$$
		And these two sets need not be equal to each other.	\\
		For instance, based on Example 3.2, we have
		$$N_{G_{k^\ast}}(p,1) \subsetneq N_{k^\ast}(p, 1)$$ 
		because for $p=(y_2,y_2) \in SC_8^{2,6}\times SC_8^{2,6}$, we have
		
		$$\left \{
		\aligned & N_{G_{k^\ast}}(p,1)=\{(y_2,y_1), p, (y_2, y_3), (y_3,y_2), (y_1, y_2)\}  \,\,\text{and}\\
		& N_{k^\ast}(p, 1)=N_{G_{k^\ast}}(p,1) \cup \{(y_1, y_1)\},
		\endaligned
		\right\} $$
		which supports the property of (3.6), where $k^\ast=k(2,4)$.\\
		(2) In the case that there exists a  $C_{k^\ast}$-adjacency of  $X^2$ (Definition 3.2 of \cite{H7}), it is a special case of the $G_{k^\ast}$-adjacency of  $X^2$.
	\end{remark}
To support Definition 1 and Remarks 3.4 and 3.5, we have the following example.

\begin{example}
(1)	Let $X=SC_8^{2,4}=(x_0, x_1, x_2, x_3)$, where $x_0=(2m_1, 2m_2), x_1=(2m_1+1,2m_2-1), x_2=(2m_1+2,2m_2), x_3=(2m_1+1,2m_2+1)$, where $m_1, m_2 \in {\mathbb Z}$.
	Let us now consider the Cartesian product $X^2$.
	According to the condition of (3.1), the $G_{k^\ast}$-adjacency on $X^2$ is clearly obtained, where $k^\ast=k(2,4)=32$ (see (2.1)) so that $G_{k^\ast}=k^\ast=C_{k^\ast}$.\\
	Then, for each $p \in X^2$, we obtain $N_{G_{k^\ast}}(p,1)=N_{k^\ast}(p, 1)$ under the digital space $(X^2, G_{k^\ast})$, where $k^\ast=k(2,4)=32$.\\
	(2) We obtain $(SC_{26}^{3,5}\times SC_{26}^{3,5}, G_{k^\ast})$, where 
	$k^\ast=k(3,6)=232$, where $SC_{26}^{3,5}$ is shown in Figure 1(1) of \cite{H6}, i.e., $SC_{26}^{3,5}$ is assumed to be the set as an example in Figure 1(1).\\	
	In this case, 
	for each $p \in SC_{26}^{3,5}\times SC_{26}^{3,5}$, we obtain  $G_{k^\ast}=k^\ast$ so that $N_{G_{k^\ast}}(p,1)=N_{k^\ast}(p, 1)$ with the digital space $(SC_{26}^{3,5}\times SC_{26}^{3,5}, G_{k^\ast})$, where $k^\ast=k(3,6)=232$.\\ 
\end{example}

The $G_{k^\ast}$-adjacency of $X\times X$ of Definition 1 indeed plays a crucial role in studying a $DT$-$k$-group,  a $DT$-$k$-ring, and a $DT$-$k$-field and their substructures in Sections 4 and 5 (see Definitions 4, 5, 6, 7, and 8).\\

To represent a Cartesian product of $SC_{k} ^{n, l}$,
 as a matrix, we will use the notation\\
$$SC_{k} ^{n, l}:=(a_i)_{i \in [0, l-1]_{\mathbb Z}}.$$\\
 Then, take the Cartesian product using the following matrix:
 $$ SC_{k} ^{n, l} \times SC_{k} ^{n, l}:=(c_{i,\,j})_{(i,j) \in [0, l-1]_{\mathbb Z} \times [0, l-1]_{\mathbb Z}},  \eqno(3.7)$$
 where $c_{i,j}:=(a_i, a_j)$.\\

Indeed, the paper focuses on developing some algebraic structures in a digital-topological setting  such as $DT$-$k$-group, $DT$-$k$-ring, and $DT$-$k$-field (see Sections 4 and 5) endowed with $(X, k)$ and some algebraic structures on $X$ such as 
a group $(X, \ast_1)$ or a ring or a field $(X, \ast_1, \star)$.
Thus we will press on Definition 1 to establish a $DT$-$k$-ring and a $DT$-$k$-field which will be formulated in Sections 5 and 6.

As a special case of Lemma 4.6 of \cite{H7}, the following is obtained.

\begin{lemma}\cite{H7} For $SC_k^{n, l}$,
there is always a $G_{k^\ast}$-adjacency of $SC_{k}^{n, l} \times SC_{k}^{n, l}$, where $k^\ast:=k(t, 2n)$ in which the number $t$ is equal to number $t$ in $k:=k(t, n)$.
\end{lemma}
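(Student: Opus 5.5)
The statement is essentially an existence claim, and I would prove it by unwinding Definition 1 for $X:=SC_k^{n,l}=(a_i)_{i\in[0,l-1]_{\mathbb Z}}$ with $k=k(t,n)$. The $G_{k^\ast}$-adjacency on $X^2$ is defined purely by condition (3.1), which uses only the $k$-neighborhoods $N_k(a_i,1)$ from (2.3). These neighborhoods are determined by $X$ and $k$ alone. Hence the plan is to write down the relation explicitly on the matrix form (3.7) of $SC_k^{n,l}\times SC_k^{n,l}$, and then check that it is a well-defined relation and is compatible with the label $k^\ast=k(t,2n)$.

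\textbf{Step 1.} Using the defining property of a simple closed $k$-curve, namely that $a_i$ and $a_j$ are $k$-adjacent if and only if $\vert i-j\vert=\pm1 \pmod l$, I would compute
$$N_k(a_i,1)=\{a_{i-1},a_i,a_{i+1}\}\quad (\text{indices mod } l).$$
By (3.5) this gives, for each $c_{i,j}=(a_i,a_j)$,
$$N_{G_{k^\ast}}(c_{i,j},1)=\{c_{i-1,j},c_{i+1,j},c_{i,j-1},c_{i,j+1},c_{i,j}\}.$$
This set is nonempty and well defined for every point, so the relation $G_{k^\ast}$ exists.

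\textbf{Step 2.} I would verify that this is a legitimate adjacency, that is, non-reflexive on distinct points and symmetric. Symmetry holds because $k$-adjacency on $X$ is symmetric: $c_{i,j}$ is related to $c_{i\pm1,j}$ exactly when the reverse holds. Distinctness holds because $l\ge 4$, so $a_{i\pm1}\neq a_i$.

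\textbf{Step 3.} I would justify the subscript $k^\ast=k(t,2n)$ with the same $t$. If $q=(a_{i\pm1},a_j)$, then $q$ and $p=(a_i,a_j)$ agree in the last $n$ coordinates. Since $a_{i\pm1}$ is $k(t,n)$-adjacent to $a_i$, they differ by $\pm1$ in at most $t$ of the first $n$ coordinates. Therefore $p$ and $q$ are $k(t,2n)$-adjacent in ${\mathbb Z}^{2n}$, in agreement with Lemma 3.1(2) and (3.6).

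The only point needing care is Step 1. There one has to use the ``if and only if'' in the definition of $SC_k^{n,l}$ to exclude extra adjacencies among non-consecutive $a_i$. Even without that, however, existence follows directly from Definition 1, since (3.1) always makes sense for any $(X,k)$. I therefore expect no genuine obstacle: the lemma is a direct specialization of Lemma 4.6 of \cite{H7}.
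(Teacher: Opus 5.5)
Your proof is correct and follows the same approach as the paper. The paper gives no separate argument: it cites this as a special case of Lemma 4.6 of \cite{H7} and notes in Remark 3.4 that Definition 1 always produces a $G_{k^\ast}$-adjacency for any $(X,k)$, which is your closing observation. Your explicit computation of $N_{G_{k^\ast}}(c_{i,j},1)$ and your check in Step 3 are correct and consistent with (3.5) and Lemma 3.1(2), but the existence claim does not need them.
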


By Definition 2,  the following is obtained \cite{H7}.
\begin{example} (1) Assume $X:=[0, l]_{\mathbb Z}\times [0, l]_{\mathbb Z} \subset {\mathbb Z}^2$.
Then, a $G_{k^\ast}$-adjacency on $X \subset {\mathbb Z}^2$ is taken such that $k^\ast:=k(1, 2)=4$ (see Section 3 of \cite{H7}). Besides, for $p \in X$, $N_{G_{k^\ast}}(p, 1)\subset X$ is equal to $N_4(p, 1)\subset X$ (see (2.3) and (3.4)).\\
(2)  In $SC_8^{2,6} \times SC_8^{2,6}$ (see Remark 3.5), we obtain
$$N_{G_{32}}(c_{2,2}, 1) \neq  N_{32}(c_{2,2}, 1),$$
because
$$\sharp N_{G_{32}}(c_{2,2}, 1) = 5\,\,\text{and}\,\, \sharp N_{32}(c_{2,2}, 1) = 6. \eqno(3.8)$$
\end{example}

By Remark 3.3 and the property of (3.5), we obtain the following:
\begin{remark}  With the hypothesis stated in Definition 2 and (3.5), for $(X, k:=(t, n))$ we obtain the following:\\
(1) $N_{G_{k^\ast}}(p, 1)$ always exists in $X^2$, where  $k^\ast:=k(t, 2n)$ in which the number $t$ is equal to the number $t$ of $(X, k:=(t, n))$\cite{H7}.\\
(2) $N_{G_{k^\ast}}(p)$ need not be equal to $N_{k^\ast}^\ast(p)$, where
        $N_{k^\ast}^\ast(p):=\{q \in X^2\,\vert \, q \,\,\text{is}\,\,k^\ast\text{-adjacent to}\,\,p\}$, where the number $k^\ast$ is the digital connectivity of $X^2$ stated in (2.1). \\
(3) Not every $N_{k^\ast}(p, 1)$ in $X^2$ (see (2.3)) is always equal to $N_{G_{k^\ast}}(p,1), p \in X^2$.\\
            (4)  Given $(X, k:=k(t, n))$, assume an $N_{G_{k^\ast}}(p, 1)$, where $p:=(x_1, x_2) \in  X^2$.
Then, according to (3.5), we have the following equality \cite{H7}
 $$\sharp N_{G_{k^\ast}}(p, 1) = \sharp N_{k}(x_1, 1) + \sharp N_{k}(x_2, 1)-1.$$
\end{remark}

As a special case of Definition 3.8 and 4.18 of \cite{H7}, 
let us introduce the $(G_{k^\ast}, k)$-continuity of a map $f: (X^2, G_{k^\ast}) \to (X, k)$.

\begin{definition} \cite{H7} Given $(X, k)$, $X \subset {\mathbb Z}^{n}$,
consider the digital space $(X^2, G_{k^\ast})$. A function $f: (X^2, G_{k^\ast}) \to (X, k)$ is $(G_{k^\ast}, k)$-continuous at
a point $p:=(x_1, x_2)$ if for any point $q \in X^2$ such that $q \in N_{G_{k^\ast}}(p,1)$, we obtain
 $f(q) \in N_{k}(f(p), 1)$.
 In the case that the map $f$ is $(G_{k^\ast}, k)$-continuous at each point $p \in X^2$, we say that the map $f$ is $(G_{k^\ast}, k)$-continuous.
\end{definition}

Owing to Remark 3.5, compared to Proposition 2.1, we strongly note that the $(G_{k^\ast}, k)$-continuity of Definition 3 need not equal to any digital continuity of Proposition 2.1.
Besides, this $(G_{k^\ast}, k)$-continuity can be represented by using both a $G_{k^\ast}$-neighborhood and a digital $k$-neighborhood.
Namely, as a special case of Proposition 4.19 of \cite{H7}, we obtain the following (compare Propositions 2.1 and 3.10):

\begin{proposition}\cite{H7} Assume $(X^2, G_{k^\ast})$ induced by $(X, k)$.
 A map $f:(X^2, G_{k^\ast}) \to (X, k)$ is 
  $(G_{k^\ast}, k)$-continuous at the point $p \in X^2$ if and only if
 $$f(N_{G_{k^\ast}}(p, 1)) \subset N_{k}(f(p), 1).\eqno(3.9)$$
 A map $f:(X^2, G_{k^\ast}) \to (X, k)$ is  $(G_{k^\ast}, k)$-continuous if and only if
 for every point $p \in X^2$ we have $$f(N_{G_{k^\ast}}(p, 1)) \subset N_{k}(f(p), 1).$$
 \end{proposition}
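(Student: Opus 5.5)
The plan is to unfold Definition 3 and show that it says the same thing as the set inclusion (3.9). I will do this pointwise first and then quantify over all points.

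For the first assertion, fix $p:=(x_1,x_2)\in X^2$ and prove both implications.

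\emph{Forward direction.} Assume $f$ is $(G_{k^\ast},k)$-continuous at $p$ in the sense of Definition 3. Take any element $y\in f(N_{G_{k^\ast}}(p,1))$. Then $y=f(q)$ for some $q\in N_{G_{k^\ast}}(p,1)$. Definition 3 gives $f(q)\in N_k(f(p),1)$ directly, so (3.9) holds.

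\emph{Backward direction.} Assume (3.9) holds, and let $q\in N_{G_{k^\ast}}(p,1)$. Then $f(q)\in f(N_{G_{k^\ast}}(p,1))\subset N_k(f(p),1)$, which is exactly the condition of Definition 3 at $p$.

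The only point needing care is the case $q=p$. Here $f(p)\in N_k(f(p),1)$ holds trivially, because (2.3) puts the center into $N_k(f(p),1)$. Similarly, (3.4) puts $p$ into $N_{G_{k^\ast}}(p,1)$. So this case is consistent with both formulations.

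For the second assertion, Definition 3 defines $(G_{k^\ast},k)$-continuity of $f$ as continuity at every point $p\in X^2$. Applying the pointwise equivalence at each $p$ gives the global equivalence immediately.

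If one wants the inclusion in terms of the explicit product description, one can also substitute (3.5):
$$N_{G_{k^\ast}}(p,1)=(N_k(x_1,1)\times\{x_2\})\cup(\{x_1\}\times N_k(x_2,1)).$$
Then (3.9) reads: $f(x_1',x_2)$ and $f(x_1,x_2')$ lie in $N_k(f(p),1)$ whenever $x_1'\in N_k(x_1,1)$ and $x_2'\in N_k(x_2,1)$. This reformulation is not needed for the equivalence itself.

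I expect no real obstacle, since the statement is essentially a restatement of Definition 3 in set-image language, parallel to Proposition 2.1. The only subtlety is the bookkeeping of the reflexive point just described.
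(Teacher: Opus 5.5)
Your proposal is correct and matches the intended reasoning. The paper gives no separate proof: it imports the statement from earlier work as immediate, and the claim is indeed just Definition 3 rewritten as the image inclusion $f(N_{G_{k^\ast}}(p,1)) \subset N_k(f(p),1)$, exactly as Proposition 2.1 restates digital continuity. Your separate treatment of $q=p$ is harmless but unnecessary, since both formulations quantify over the same set $N_{G_{k^\ast}}(p,1)$, which already contains $p$.
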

 Indeed, this Proposition 3.10 will be essential for establishing a $DT$-$k$-group on $X$ (see Section 4).

\begin{corollary}  \cite{H7} Let $(X, 2n)$ be a $2n$-connected subset of $({\mathbb Z}^n, 2n)$. Then
 each of the projection maps $P_i: (X \times X, G_{4n}) \to (X, 2n)$ is a $(G_{4n}, 2n)$-continuous map, $i \in \{1, 2\}$, where $G_{4n}=4n$.
\end{corollary}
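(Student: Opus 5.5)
The plan is to use Proposition 3.10 directly. Fix $i \in \{1,2\}$ and a point $p:=(x_1,x_2) \in X\times X$. It suffices to show
$$P_i(N_{G_{4n}}(p,1)) \subset N_{2n}(P_i(p),1)=N_{2n}(x_i,1).$$
Here $k=2n=k(1,n)$, so $k^\ast=k(1,2n)=4n$. This is why the $G_{k^\ast}$-adjacency is written $G_{4n}$.

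The identification $G_{4n}=4n$ should be checked first. By (3.1), two points of $X^2$ are $G_{4n}$-adjacent exactly when one coordinate block agrees and the other blocks are $2n$-adjacent in ${\mathbb Z}^n$. That means the two points differ by $\pm1$ in exactly one of the $2n$ coordinates, which is precisely $4n$-adjacency in ${\mathbb Z}^{2n}$ restricted to $X^2$. (This matches the remark after Definition 1 that for $t=1$ the two relations coincide.) This identification is only needed to justify the notation in the statement; the continuity argument does not rely on it.

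The main step uses (3.5):
$$N_{G_{4n}}(p,1)=(N_{2n}(x_1,1)\times\{x_2\})\cup(\{x_1\}\times N_{2n}(x_2,1)).$$
Take $i=1$. Points of the first piece project under $P_1$ into $N_{2n}(x_1,1)$. Points of the second piece all project to $x_1$, which lies in $N_{2n}(x_1,1)$ by (2.3). Hence $P_1(N_{G_{4n}}(p,1))=N_{2n}(x_1,1)$. The case $i=2$ is symmetric. Since $p$ was arbitrary, Proposition 3.10 gives that each $P_i$ is $(G_{4n},2n)$-continuous.

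I expect no real obstacle. The only point needing care is the bookkeeping for $G_{4n}=4n$, that is, that $t=1$ forces exactly one coordinate of ${\mathbb Z}^{2n}$ to change. The $2n$-connectedness hypothesis on $X$ is not used beyond guaranteeing that $(X,2n)$ is a digital image in the paper's standing convention.
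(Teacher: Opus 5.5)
Your proof is correct and is essentially the argument the paper intends: the corollary is stated without proof as an immediate consequence of Proposition 3.10, and checking $P_i(N_{G_{4n}}(p,1))\subset N_{2n}(x_i,1)$ through the decomposition (3.5) is exactly that derivation. Your side check that $G_{4n}$ coincides with $4n$-adjacency on $X^2$ is also right, since $t=1$ forces exactly one of the $2n$ coordinates to change, and you are correct that $2n$-connectedness plays no role.
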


The $(G_{k^\ast}, k)$-continuity of Proposition 3.10 and Corollary 3.11 will facilitate a certain continuity of a multiplication for formulating a $DT$-$k$-group (see Definition 4) in Section 4.\\

Based on Propositions 2.1 and 3.10 and Definition 3, let us compare the  $(G_{k^\ast}, k)$-continuity and the typical $k$-continuity.

\begin{theorem} Assume a map $f:(X^2, G_{k^\ast}) \to (X, k)$. 
	Then for any point $(x, x^\prime) \in X^2$, the $(G_{k^\ast}, k)$-continuity of the restriction of the map $f$ onto $X\times \{x^\prime\}$ or $\{x\}\times X$, say
	$f\vert_{X\times \{x^\prime\}}: (X\times \{x^\prime\}, G_{k^\ast}) \to (X,k)$ or 	$f\vert_{\{x\}\times X}: (\{x\}\times X, G_{k^\ast}) \to (X,k)$,
		 is equivalent to the $k$-continuity of a self-map $g$ of $(X,k)$ for each $x$ and $x^\prime$.
			 \end{theorem}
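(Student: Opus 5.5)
The plan is to make the self-map $g$ explicit and then compare neighborhoods using (3.5). Fix $x^\prime \in X$ and define $g_{x^\prime}: X \to X$ by $g_{x^\prime}(y):=f(y, x^\prime)$. In the same way, for fixed $x$ put $h_x(y):=f(x, y)$. Both are just $f$ composed with the obvious bijections $y \mapsto (y, x^\prime)$ and $y \mapsto (x, y)$. The claim then reduces to showing that $(G_{k^\ast}, k)$-continuity of $f\vert_{X\times\{x^\prime\}}$ is equivalent to $k$-continuity of $g_{x^\prime}$, and symmetrically for $h_x$. Since the two cases are symmetric, I would write out only the first.

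The key step is to compute the $G_{k^\ast}$-neighborhood inside the slice. The restriction $f\vert_{X\times\{x^\prime\}}$ is understood with the relation $G_{k^\ast}$ restricted to the subset $X\times\{x^\prime\}$. For $p=(y, x^\prime)$, formula (3.5) gives
$$N_{G_{k^\ast}}(p,1)\cap (X\times\{x^\prime\}) = N_k(y,1)\times\{x^\prime\}.$$
The reason is that the second piece $\{y\}\times N_k(x^\prime,1)$ meets the slice only in $p$ itself. Consequently $f(N_{G_{k^\ast}}(p,1)\cap(X\times\{x^\prime\})) = g_{x^\prime}(N_k(y,1))$, and $f(p)=g_{x^\prime}(y)$.

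Next I would apply Proposition 3.10 to the restricted map. It says that $f\vert_{X\times\{x^\prime\}}$ is $(G_{k^\ast},k)$-continuous iff for every $y\in X$ we have $g_{x^\prime}(N_k(y,1))\subset N_k(g_{x^\prime}(y),1)$. By Proposition 2.1 this is exactly the $k$-continuity of $g_{x^\prime}$, which gives both directions at once. For the $\{x\}\times X$ slice the same argument runs with the first factor frozen, using $N_{G_{k^\ast}}((x,y),1)\cap(\{x\}\times X)=\{x\}\times N_k(y,1)$.

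The main obstacle is interpretive rather than computational. One has to fix that the restriction carries the induced relation, so that Proposition 3.10 applies to the subspace with neighborhoods intersected with the slice. One also has to check that the extra points of $N_{G_{k^\ast}}(p,1)$ lying off the slice play no role. Once (3.5) is intersected with the slice correctly, both implications are immediate.
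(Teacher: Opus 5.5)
Your proposal is correct and follows the paper's own route: the paper identifies each slice $(X\times\{x^\prime\}, G_{k^\ast})$ and $(\{x\}\times X, G_{k^\ast})$ with $(X,k)$ via (3.1) and then invokes Propositions 2.1 and 3.10. Your version additionally makes explicit the self-map $g_{x^\prime}(y)=f(y,x^\prime)$ and the slice neighborhood $N_{G_{k^\ast}}(p,1)\cap(X\times\{x^\prime\})=N_k(y,1)\times\{x^\prime\}$, both of which the paper leaves implicit.
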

\begin{proof} Given a relation set $(X^2, G_{k^\ast})$, for any point $(x, x^\prime) \in X^2$, each of the digital subspaces $(\{x\} \times X,  G_{k^\ast})$ and 
$(X \times \{x^\prime\}, G_{k^\ast})$ is equivalent to $(X, k)$ (see (3.1)).
Thus, by Propositions 2.1 and 3.10, the proof is completed.
	\end{proof}

\section{\bf A development of a $DT$-$k$-ring $(X, k, \ast_1, \star)$}\label{s6}

As a more luxurious structure than a $DT$-$k$-group, this section initially establishes  a $DT$-$k$-ring structure
derived from both $(X, k)$ and
 a ring  $(X, \ast_1, \star)$, denoted by $(X, k, \ast_1, \star)$.
 To do this work, first we need to recall the $DT$-$k$-group structure in \cite{H7} derived from both $(X, k)$ and a group $(X, \ast_1)$.\\

Based on the $G_{k^\ast}$-adjacency of $X\times X$ of Definition 1 and the $(G_{k^\ast}, k)$-continuity of Proposition 3.10, the notion of a $DT$-$k$-group was defined, as follows:
\begin{definition} \cite{H7}
  A $DT$-$k$-group, denoted by $(X, k, \ast_1)$, is $(X, k:=k(t, n))$ endowed with a group structure on $X \subset {\mathbb Z}^n$ using a certain binary operation $\ast_1$  such that\\
  (1) for any $(x, y) \in X^2$, the multiplication with respect to the operation $\ast_1$
$$\alpha: (X^2, G_{k^\ast}) \to (X, k)\,\,\text{defined by}\,\,\alpha(x, y)= x \ast_1 y\,\,\text{is}\,\,(G_{k^\ast}, k)\text{-continuous} \eqno(4.1)$$
and\\
(2) for each $x \in X$, the inverse map  with respect to the operation $\ast_1$
$$\beta: (X,k) \to (X,k)\,\,\,\text{defined by}\,\,\beta(x)= x^{-1}\,\,\text{is}\,\,k\text{-continuous},\,\,\,\,\,\,\,\,\,\eqno(4.2)$$
where $x^{-1}$ means the inverse element of $x$ with respect to the operation $\ast_1$.
 \end{definition}
   In particular, in the case that a group  $(X, \ast_1)$ is abelian, the $DT$-$k$-group $(X, k, \ast_1)$ is called abelian \cite{H7}. Besides, we recall that $SC_k^{n,l}$ is a sequence $(x_i)_{i \in [0,l-1]_{\mathbb Z}}$, which implies that there exists a circular order in the sequence, i.e., an existence of a linear order such as $x_i \lneq x_{i-1}, i \in [0,l-2]_{\mathbb Z}$.

 \begin{theorem} \cite{H7} (1) $(SC_k^{n,l}:=(x_i)_{i \in [0, l-1]_{\mathbb Z}}, \ast_1)$ is an abelian $DT$-$k$-group.\\
 	(2) $({\mathbb Z}^n, 2n, +)$ is an abelian $DT$-$2n$-group.
 \end{theorem}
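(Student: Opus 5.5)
The plan is to verify the two conditions of Definition 4 directly, using the neighborhood description (3.5) and the criterion of Proposition 3.10 for the multiplication, and Proposition 2.1 for the inverse map. Throughout I take the group operation on $SC_k^{n,l}:=(x_i)_{i\in[0,l-1]_{\mathbb Z}}$ to be the one inherited from the circular order, $x_i\ast_1 x_j:=x_{(i+j)\,(mod\, l)}$. The statement does not restate the operation, so fixing this choice explicitly is the first step. With it, the index map $x_i\mapsto i$ is a group isomorphism onto $({\mathbb Z}_l,+)$. Hence $(SC_k^{n,l},\ast_1)$ is an abelian group with identity $x_0$ and inverses $x_i^{-1}=x_{(-i)\,(mod\, l)}$.

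For (1), the key observation is that, since $SC_k^{n,l}$ is a simple closed $k$-curve with $l\ge 4$, its $k$-neighborhoods are
$$N_k(x_i,1)=\{x_{i-1},x_i,x_{i+1}\},$$
with indices read mod $l$. Fix $p=(x_i,x_j)$. By (3.5), $N_{G_{k^\ast}}(p,1)$ consists of $p$ together with the points $(x_{i\pm1},x_j)$ and $(x_i,x_{j\pm1})$. Under $\alpha$ these go to $x_{i+j}$ and $x_{i+j\pm1}$, all of which lie in $N_k(x_{i+j},1)=N_k(\alpha(p),1)$. Proposition 3.10 then yields the $(G_{k^\ast},k)$-continuity (4.1). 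For the inverse, $\beta(x_{i\pm1})=x_{-i\mp1}$, which lies in $N_k(x_{-i},1)=N_k(\beta(x_i),1)$. So (4.2) follows from Proposition 2.1.

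For (2), I argue in the same way with $X={\mathbb Z}^n$, $k=2n$ and $\ast_1=+$. Here $N_{2n}(x,1)=\{x\}\cup\{x\pm e_m : m\in[1,n]_{\mathbb Z}\}$, where the $e_m$ are the standard unit vectors. By (3.5), the $G_{4n}$-neighborhood of $(x,y)$ consists of $(x,y)$, $(x\pm e_m,y)$ and $(x,y\pm e_m)$. Addition sends each of these to $x+y$ or $x+y\pm e_m$, all in $N_{2n}(x+y,1)$. Negation sends $x\pm e_m$ to $-x\mp e_m$, which lies in $N_{2n}(-x,1)$. Commutativity of $+$ gives abelianness.

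The only real obstacle is the bookkeeping in (1). The description $N_k(x_i,1)=\{x_{i-1},x_i,x_{i+1}\}$ relies on the ``if and only if'' in the definition of $SC_k^{n,l}$, namely that no non-consecutive points are $k$-adjacent, together with $l\ge4$. One must also check that reducing indices mod $l$ is compatible with adjacency at the wrap-around points $x_{l-1},x_0$; this holds because adjacency in $SC_k^{n,l}$ is itself defined mod $l$.
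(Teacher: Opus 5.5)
Your proof is correct and is essentially the intended argument. The paper does not reprove this theorem; it cites \cite{H7} and recalls the same operation $x_i \ast_1 x_j = x_{i+j\,(mod\, l)}$ of (4.3), and your direct check is the natural verification in this framework: use (3.5) with Proposition 3.10 for $\alpha$, Proposition 2.1 for $\beta$, and the fact that $N_k(x_i,1)=\{x_{i-1},x_i,x_{i+1}\}$ with indices read mod $l$.
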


As for Theorem 4.1, we need to recall that the element $x_0$ is the identity element of the group $(SC_k^{n,l}, \ast_1)$ under the operation $\ast_1$. 
Since Theorem 4.1 is strongly related to the development of some algebraic and topological structures that are richer than a $DT$-$k$-group,
we need to recall the process of establishing a $DT$-$k$-group structure of $(SC_k^{n,l}, \ast_1)$ \cite{H7} more precisely, as follows:\\
A group structure on $SC_k^{n, l}:=(x_i)_{i \in [0, l-1]_{\mathbb Z}}$ can be considered  in terms of the following operation $\ast_1$.
$$\ast_1:SC_k^{n, l} \times SC_k^{n, l} \to SC_k^{n, l}$$
 defined by
 $$\ast_1(x_i, x_j)=x_i \ast_1 x_j=x_{i+j(mod\,l)}. \eqno (4.3)$$
  Then it is clear that the group $(SC_k^{n, l}, \ast_1)$ is abelian \cite{H7}.\\

Unlike the operation of (4.3), let us assume another binary operation on $SC_k^{n, l}$.
 Namely, for two elements $x_i, x_j \in SC_k^{n,l}:=(x_i)_{i \in [0, l-1]_{\mathbb Z}}$,  we now  establish a binary operation on $SC_k^{n, l}$ \\
$$\star:SC_k^{n, l} \times SC_k^{n, l} \to SC_k^{n, l}$$
 defined by
 $$\star(x_i, x_j)=x_i \star x_j=x_{i\cdot j(mod\,l)}, \eqno (4.4)$$
 where we note that $\lq\lq$$i\cdot j(mod\,l)"$ means the typical multiplication of $i$ and $j$ under the operation $\lq\lq$$\cdot(mod\,l)"$.
Hereinafter, as for the operations (4.3) and (4.4), we will follow modular arithmetic.
Hence we obviously obtain the following:

\begin{lemma} Given $SC_k^{n,l}:=(x_i)_{i \in [0, l-1]_{\mathbb Z}}$,
$(SC_k^{n,l}, \star)$ is a semigroup with the unique unity $x_1$.
\end{lemma}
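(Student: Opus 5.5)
The plan is to transport the whole question to the integers modulo $l$ through the indexing map $\phi: [0, l-1]_{\mathbb Z} \to SC_k^{n,l}$, $\phi(i)=x_i$. Because $SC_k^{n,l}$ is a sequence of $l$ distinct points, $\phi$ is a bijection. By (4.4) it satisfies $\phi(i)\star\phi(j)=\phi(i\cdot j \,(mod\, l))$, so $\phi$ is an isomorphism of magmas from $({\mathbb Z}_l,\cdot)$ onto $(SC_k^{n,l},\star)$. Every algebraic identity that holds in $({\mathbb Z}_l,\cdot)$ therefore holds in $(SC_k^{n,l},\star)$. The proof reduces to three facts about $\cdot(mod\, l)$: closure, associativity, and the existence and uniqueness of a two-sided identity.

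The steps are as follows.

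\emph{Closure.} For $x_i,x_j\in SC_k^{n,l}$, the residue $i\cdot j\,(mod\,l)$ lies in $[0,l-1]_{\mathbb Z}$. Hence $x_{i\cdot j(mod\,l)}$ is a well-defined element of $SC_k^{n,l}$, and $\star$ is a binary operation.

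\emph{Associativity.} We compute
$$(x_i\star x_j)\star x_m=x_{(ij)m\,(mod\,l)}=x_{i(jm)\,(mod\,l)}=x_i\star(x_j\star x_m).$$
This uses associativity of integer multiplication and the compatibility of reduction mod $l$ with products, namely $((ij \bmod l)\,m)\bmod l = (ijm)\bmod l$. In the same way, commutativity follows from $ij=ji$; this is not needed here but is consistent with the later use of a commutative semigroup.

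\emph{Unity.} We have $x_1\star x_j = x_{1\cdot j\,(mod\,l)} = x_j$, since $j\in[0,l-1]_{\mathbb Z}$. By commutativity $x_j\star x_1=x_j$ as well, so $x_1$ is a two-sided identity.

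\emph{Uniqueness.} This is the standard argument. If $e$ is another two-sided unity, then $e = e\star x_1 = x_1$.

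The only point needing care, and the closest thing to an obstacle, is that all of this depends on the $x_i$ being distinct and indexed exactly by $[0,l-1]_{\mathbb Z}$. Otherwise $\star$ might not be well defined, and $x_1$ might coincide with another point. Distinctness is guaranteed by the definition of a simple closed $k$-curve recalled in Section 2. I would also note that $l\geq 4$ forces $x_1\neq x_0$. This shows the unity $x_1$ differs from the identity $x_0$ of $(SC_k^{n,l},\ast_1)$, which will matter when the ring structure is assembled later.
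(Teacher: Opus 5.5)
Your proposal is correct and follows essentially the same route as the paper. The paper simply asserts that $\star$ is a well-defined, associative binary operation on $SC_k^{n,l}$ with $x_i\star x_1=x_i=x_1\star x_i$, whereas you carry out the modular-arithmetic checks and give the standard uniqueness argument $e=e\star x_1=x_1$ that the paper leaves as ``clear''. Your framing through the index bijection $i\mapsto x_i$ with $({\mathbb Z}_l,\cdot)$ anticipates the paper's Theorem 4.4, but it does not change the substance of the argument.
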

\begin{proof} It is clearly that the operation $\lq\lq\star$" of (4.4) is well-defined on $SC_k^{n, l}$ as a binary operation
and further, it is associative for formulating a semigroup on $SC_k^{n, l}$ under the operation $\lq\lq\star$".
Besides, it is clear that the element $x_1$ is the unique unity, i.e., for any elements $x_i (\neq x_0)\in SC_k^{n,l}$,
 we have
$$ x_i \star x_1=x_i=x_1 \star x_i. \eqno(4.5)$$
 \end{proof}

In view  of the given two operations $\lq\lq\ast_1$" and $\lq\lq\star$" in (4.3) and (4.4),
we can observe that the former is similar to an additive operation and the latter is also similar to  a multiplicative operation from the viewpoint of topological group theory.\\

Using the $(G_{k^\ast}, k)$-continuity, we now define the following.

\begin{definition}
  A $DT$-$k$-ring, denoted by $(X, k, \ast_1, \star)$, is an abelian $DT$-$k$-group $(X, k:=k(t, n), \ast)$ together with a ring structure on $X \subset {\mathbb Z}^n$, i.e., $(X, \ast_1, \star)$
     using the so-called additive binary operation $\ast_1$ and multiplicative operation $star$. 
     Namely,  a $DT$-$k$-ring, denoted by $(X, k, \ast_1, \star)$, is an abelian $DT$-$k$-group $(X, k:=k(t, n), \ast_1)$ combined with a ring structure on $X \subset {\mathbb Z}^n$, i.e., $(X, \ast_1, \star)$ such that for any $(x, y) \in X^2$ the following properties (1)--(3) hold.\\
  (1) For any element $(x,y) \in X^2$, the multiplication with respect to $\ast_1$
$$\alpha: (X^2, G_{k^\ast}) \to (X, k)\,\,\text{defined by}\,\,\alpha(x, y)= x \ast_1 y\,\,\text{is}\,\,(G_{k^\ast}, k)\text{-continuous}.$$
(2) For each $x \in X$,  the inverse map with respect to $\ast_1$
$$\beta: (X,k) \to (X,k)\,\,\text{defined by}\,\,\beta(x)= x^{-1}\,\,\text{is}\,\,k\text{-continuous.}$$ And\\
(3) for any $(x,y) \in X^2$, the multiplication with respect to the operation $\star$
$$ \gamma: (X^2, G_{k^\ast}) \to (X, k)\,\,\text{defined by}\,\,\gamma(x, y)= x \star y\,\,\text{is}\,\,(G_{k^\ast}, k)\text{-continuous}. \eqno(4.6)$$
 \end{definition}

\begin{remark} In Definition 5, in the case that a ring $(X, \ast_1, \star)$ is commutative, the $DT$-$k$-ring is said to be commutative.
\end{remark}

\begin{theorem}  Given $SC_k^{n,l}:=(x_i)_{i \in [0, l-1]_{\mathbb Z}}$,
$(SC_k^{n,l}, \ast_1, \star)$ is a commutative ring with unity $x_1$.
\end{theorem}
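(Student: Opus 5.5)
The plan is to transport the ring structure of $({\mathbb Z}_l, +, \cdot)$ to $SC_k^{n,l}$ through the index map. Define
$$\phi: [0,l-1]_{\mathbb Z} \to SC_k^{n,l}, \qquad \phi(i)=x_i.$$
This is a bijection because the points $x_i$ of the simple $k$-cycle are distinct. By (4.3) and (4.4), for all $i,j$,
$$\phi(i+j\,(mod\, l))=\phi(i)\ast_1\phi(j), \qquad \phi(i\cdot j\,(mod\, l))=\phi(i)\star\phi(j).$$
So $\phi$ intertwines addition and multiplication mod $l$ with $\ast_1$ and $\star$. Every ring axiom for $(SC_k^{n,l},\ast_1,\star)$ then reduces to the same axiom in ${\mathbb Z}_l$, applied to the indices.

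The verification proceeds in four steps.

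\begin{enumerate}
\item \textbf{Additive structure.} By Theorem 4.1(1) and the remark after (4.3), $(SC_k^{n,l},\ast_1)$ is an abelian group. Its identity is $x_0$ and the inverse of $x_i$ is $x_{l-i\,(mod\, l)}$.
\item \textbf{Multiplicative structure.} By Lemma 4.2, $(SC_k^{n,l},\star)$ is a semigroup with unity $x_1$. Commutativity follows from $x_i\star x_j=x_{ij\,(mod\, l)}=x_{ji\,(mod\, l)}=x_j\star x_i$. Note that (4.5) should hold for all $x_i$, including $x_0$, since $0\cdot 1\equiv 0$.
\item \textbf{Distributivity.} This is the only axiom linking the two operations. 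Compute
$$x_i\star(x_j\ast_1 x_m)=x_{i\cdot((j+m)\,mod\, l)\,(mod\, l)}=x_{(ij+im)\,(mod\, l)}=(x_i\star x_j)\ast_1(x_i\star x_m).$$
Right distributivity then follows from commutativity of $\star$.
\item \textbf{Conclusion.} Collecting these facts shows that $(SC_k^{n,l},\ast_1,\star)$ is a commutative ring with unity $x_1$. As a byproduct, $\phi$ is a ring isomorphism from ${\mathbb Z}_l$ onto $SC_k^{n,l}$.
\end{enumerate}

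No step here is a real obstacle. The one point that needs care is well-definedness of the index arithmetic in step 3. We must check that reducing $j+m$ mod $l$ before multiplying by $i$ gives the same residue as $ij+im$ mod $l$. This holds because reduction mod $l$ is a ring homomorphism ${\mathbb Z}\to{\mathbb Z}_l$; the paper's convention of following modular arithmetic in (4.3)--(4.4) makes this explicit.

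No topological input is needed. The statement is purely algebraic, and the $k$-adjacency of the cycle plays no role at this stage.
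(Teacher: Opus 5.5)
Your proposal is correct and follows the paper's proof essentially step for step: the abelian group $(SC_k^{n,l},\ast_1)$ from Theorem 4.1, the semigroup with unity $x_1$ from Lemma 4.2, and the index computation (4.7) for distributivity. Your transport-of-structure framing via $i\mapsto x_i$ is a mild streamlining, as is getting right distributivity from commutativity of $\star$ instead of recomputing it; it anticipates the isomorphism with $({\mathbb Z}_l,+,\cdot)$ that the paper proves separately in the next theorem.
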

\begin{proof} (1) By Theorem 4.1, it turns out that $(SC_k^{n,l}, \ast_1)$ is an abelian group with identity $x_0$.\\
(2) By Lemma 4.2, $(SC_k^{n,l}, \star)$ is proved to be a commutative semigroup with unity $x_1$.\\
(3) Using the operations of (4.3) and (4.4), let us now prove the distributive law under the operations $\ast_1$ and $\star$.
To be specific, for any elements $x_i, x_j$, and $x_m$ in $SC_k^{n,l}$,
we obtain
$$\left \{
\aligned  & x_i\star (x_j \ast_1 x_m)=x_i\star x_{j+m (mod\,l)}= x_{(i\cdot(j+m)(mod\,l))(mod\,l)}\\
          &=x_{i\cdot j(mod\,l) + i\cdot m(mod\,l)}=(x_i\star x_j) \ast_1 (x_i\star x_m).
          \endaligned
\right\} \eqno(4.7)
$$
Similarly, we obtain
$$(x_i \ast_1 x_j)\star x_m= (x_i\star x_m)\ast_1 (x_j\star x_m),$$
which implies that $(SC_k^{n,l}, \ast_1, \star)$ is a ring. \\
(4) It is clear that $(SC_k^{n,l}, \star)$ is a commutative ring with unity $x_1$.\\
In view of (1)--(4) above, the proof is completed.
\end{proof}

To study a $DT$-$k$-ring, let us recall the typical ring $({\mathbb Z}_l:={\mathbb Z}/l{\mathbb Z}, +, \cdot)$ \cite{Fr1} under the arithmetic of both addition modulo $l$ and multiplication modulo $l$.
Then, based on the operations proposed in (4.3) and (4.4), we obtain the following:

 \begin{theorem} The commutative ring $(SC_k^{n,l}:=(x_i)_{i \in [0, l-1]_{\mathbb Z}}, \ast_1, \star)$ is isomorphic to the typical finite ring $({\mathbb Z}_l, +, \cdot)$,
 where the operations $\lq\lq$$+"$ and $\lq\lq$$\cdot"$ means the operations $\lq\lq$$+(\text{mod}\, l)"$ and $\lq\lq$$\cdot(\text{mod}\, l)"$, respectively.
  \end{theorem}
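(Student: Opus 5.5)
The natural approach is to write down the index map explicitly. Define
$$h: SC_k^{n,l}:=(x_i)_{i \in [0, l-1]_{\mathbb Z}} \to ({\mathbb Z}_l, +, \cdot), \qquad h(x_i)=i.$$
Since $SC_k^{n,l}$ is a sequence of $l$ distinct points indexed by $[0,l-1]_{\mathbb Z}={\mathbb Z}_l$, the map $h$ is well defined. It is a bijection, with inverse $i \mapsto x_i$. In particular, $h$ is injective because distinct indices label distinct points of the simple $k$-cycle.

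Next we check that $h$ preserves both operations, which follows directly from (4.3) and (4.4). For the additive operation,
$$h(x_i \ast_1 x_j)=h(x_{i+j(\text{mod}\,l)})=i+j\ (\text{mod}\,l)=h(x_i)+h(x_j).$$
For the multiplicative operation,
$$h(x_i \star x_j)=h(x_{i\cdot j(\text{mod}\,l)})=i\cdot j\ (\text{mod}\,l)=h(x_i)\cdot h(x_j).$$
Consequently $h$ sends the additive identity $x_0$ to $0$ and the unity $x_1$ to $1$, which is consistent with Theorem 4.1 and Lemma 4.2.

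It follows that $h$ is a bijective ring homomorphism and hence a ring isomorphism. Alternatively, one could argue that the ring axioms established in Theorem 4.4 are transported from ${\mathbb Z}_l$ through $h$.

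There is no real obstacle here. The one point that needs care is that the operations (4.3) and (4.4) are defined purely through the index arithmetic. Because of this, the geometry of the embedding in ${\mathbb Z}^n$ plays no role, and the isomorphism is purely algebraic, with no digital-topological content.
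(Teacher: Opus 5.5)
Your proof is correct and follows the paper's own argument. The paper also uses the index map $x_i \mapsto i \ (\mathrm{mod}\, l)$, written there as two maps $\phi$ and $\psi$ on $(SC_k^{n,l},\ast_1)$ and $(SC_k^{n,l},\star)$, and simply asserts that each is a homomorphic bijection. That is exactly what your computations from (4.3) and (4.4) verify, and using a single map $h$ for both operations is the cleaner formulation of a ring isomorphism.
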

\begin{proof}
 First  of all, in terms of the operations  (4.3) and (4.4), by Theorem 4.1 and Lemma 4.2, it is clear that
  $(SC_k^{n,l}:=(x_i)_{i \in [0, l-1]_{\mathbb Z}}, \ast_1, \star)$ is a commutative ring with unity $x_1$.\\
Next, to prove an isomorphism between  $(SC_k^{n,l}, \ast_1, \star)$  and $({\mathbb Z}_l, +, \cdot)$, let us consider the two maps
    $$\left \{\aligned
    & \phi: (SC_k^{n,l}, \ast_1) \to  ({\mathbb Z}_l, +)\,\,\text{defined by} \,\,\phi(x_i)=i(\text{mod}\,l)\\
    & \text{under the operation of}\,\,(4.3)\,\,\text{and}\,\,\lq\lq +:=+(\text{mod}\,l)" \text{and}\\
    &  \psi: (SC_k^{n,l}, \star) \to  ({\mathbb Z}_l, \cdot)\,\,\text{defined by} \,\,\phi(x_i)=i(\text{mod}\,l)\\
    & \text{under the operation of} \,\,(4.4)\,\,\text{and}\,\,\lq\lq\cdot:=\cdot(\text{mod}\,l)".
\endaligned\right\} \eqno(4.8)
$$
 Then, each of the maps $\phi$ and $\psi$ is obviously a homomorphic bijection. 
 \end{proof}

Compared to Theorem 4.1, the following is obtained.

\begin{remark}  $({\mathbb Z}^n, 2n, +, \cdot)$ is not a $DT$-$2n$-ring.
\end{remark}
{\em Proof:} Although $({\mathbb Z}^n, +, \cdot)$ is obviously a ring and $({\mathbb Z}^n, 2n, +)$ is an abelian $DT$-$2n$-group (see Theorem 4.1),  $({\mathbb Z}^n, 2n, +, \cdot)$ cannot be a $DT$-$2n$-ring.
More precisely, let us check the condition (3) of Definition 5. 
Namely, we need to examine if the multiplication
$$\cdot:{\mathbb Z}^n \times {\mathbb Z}^n \to {\mathbb Z}^n \,\,\text{defined by} \,\,(x,y)\to x\cdot y  \eqno(4.9)$$
is $(G_{2n}, 2n)$-continuous.
After checking this process, we find that the multiplication $\lq\lq$$\cdot$" is not $(G_{2n}, 2n)$-continuous.\\
To be specific, in the case that $n=1$, let us examine if $({\mathbb Z}^n, 2n, +, \cdot)$ is a $DT$-$2n$-ring.
Namely, we need to prove the $(G_{4}, 2)$-continuity of the multiplication (see the condition (3) of Definition 5)
$$\cdot: ({\mathbb Z}^2, G_4)=({\mathbb Z}^2, 4) \to ({\mathbb Z}, 2)$$ in (4.9) (see also (4.6)).
Consider the two points $P=(2, 2)$ and $Q=(2, 3)$ so that $Q \in N_{G_4}(P, 1)= N_4(P, 1)$.
Then the images of $P$ and $Q$ by the multiplicative operation $\lq\lq$$\cdot$" are not $2$-adjacent in $({\mathbb Z}, 2)$ because
$$ (2, 2)\to 2\cdot 2=4 \,\,\text{and}\,\,(2, 3)\to 2\cdot 3= 6,$$
i.e., $4$ is not $2$-adjacent to $6$ in $({\mathbb Z}, 2)$.\\
In the case of $n \geq 2$, using an approach similar to the proof of the case of $n=1$, we can clearly prove that $({\mathbb Z}^n, 2n, +, \cdot)$ is not a $DT$-$2n$-ring, which completes the proof. $\Box$\\

Let us now examine if the ring $(SC_k^{n,l}, \ast_1, \star)$ is a $DT$-$k$-ring.

\begin{remark} $(SC_k^{n,l}, \ast_1, \star)$ is not a $DT$-$k$-ring.
 \end{remark}
 {\em Proof:}
Although $(SC_k^{n,l}, \ast_1, \star)$ is a ring,
the multiplication (see the condition (3) of Definition 5)
$$\gamma:(SC_k^{n,l} \times SC_k^{n,l}, G_{k^\ast}) \to SC_k^{n,l} $$ defined by
$\gamma (x_i, x_j)=x_i \star x_j=x_{i\cdot j(mod\,l)}$ is not $(G_{k^\ast}, k)$-continuous, which implies that  $(SC_k^{n,l}, \ast_1, \star)$ is not a $DT$-$k$-ring.
For instance, for $SC_8^{2,9}:=(x)_{i \in [0, 8]_{\mathbb Z}}$, consider the map
$$ \gamma: SC_8^{2,9} \times SC_8^{2,9} \to SC_8^{2,9} $$
related to the condition (3) of Definition 5.
Then, take the two elements $P:=(x_3, x_3)$ and $Q:=(x_3, x_4)$ so that $Q \in N_{G_{k^\ast}}(P, 1)$ (see the property of (3.1)), where $k^\ast=k(2,4)=32$.
After taking
$$\left \{
\aligned  & \gamma(P)=\gamma(x_3, x_3)=x_{9(mod\,9)}=x_0\,\,\text{and}\\
          & \gamma(Q)=\gamma(x_3, x_4)=x_{12(mod\,9)}=x_3,
          \endaligned
\right\} \eqno(4.10)
$$
so that $\gamma(P)$ is not $8$-adjacent to $\gamma(Q)$, which implies that  $(SC_8^{2,9}, \ast_1, \star)$ is not a $DT$-$8$-ring because the multiplication $\gamma$ with respect to the operations
$\ast_1$ and $\star$ does not satisfy the property of (4.6). $\Box$

\begin{corollary} For $l\in \mathcal{P}\setminus \{2,3\}$,  $(SC_k^{n,l}, \ast_1, \star)$ is not a $DT$-$k$-ring, where $\mathcal{P}$ denotes the set of prime numbers.
 \end{corollary}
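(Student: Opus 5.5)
The plan is to show that the multiplication $\gamma(x_i,x_j)=x_i\star x_j=x_{i\cdot j(mod\,l)}$ of (4.4) violates condition (3) of Definition 5 for every prime $l\geq 5$. Conditions (1) and (2) hold by Theorem 4.1, so everything rests on (4.6). By Proposition 3.10 it is enough to find one point $P\in SC_k^{n,l}\times SC_k^{n,l}$ and one $Q\in N_{G_{k^\ast}}(P,1)$ such that $\gamma(Q)\notin N_k(\gamma(P),1)$.

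I will use the simple $k$-cycle structure of $SC_k^{n,l}=(x_i)_{i\in[0,l-1]_{\mathbb Z}}$. For $l\geq 4$, $x_a$ and $x_b$ are $k$-adjacent if and only if $a-b\equiv\pm1\ (mod\,l)$. Hence $N_k(x_a,1)=\{x_{a-1},x_a,x_{a+1}\}$, with indices taken mod $l$.

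Next I take $P:=(x_2,x_1)$ and $Q:=(x_2,x_2)$. Since $x_2\in N_k(x_1,1)$, (3.1) gives $Q\in N_{G_{k^\ast}}(P,1)$. Then $\gamma(P)=x_2$ and $\gamma(Q)=x_{4(mod\,l)}$. For $l\geq 5$ the index $4$ is unchanged mod $l$, and $4-2=2\not\equiv 0,\pm1\ (mod\,l)$ because $l\geq5>3$. So $\gamma(Q)\notin N_k(\gamma(P),1)$, and $\gamma$ is not $(G_{k^\ast},k)$-continuous at $P$.

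The argument uses only $l\geq5$, which covers every $l\in\mathcal P\setminus\{2,3\}$; primality is irrelevant. The only delicate point is the exact neighborhood description $N_k(x_a,1)=\{x_{a-1},x_a,x_{a+1}\}$. This is where $l\geq4$ enters, and it is why $l\in\{2,3\}$ must be excluded. The argument is the same witness-point method as in Remark 4.5, with the concrete pair $(x_3,x_3),(x_3,x_4)$ replaced by one that works for every $l\geq5$.
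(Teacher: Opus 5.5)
Your proof is correct and follows the same route as the paper. The paper derives this corollary from Remark 4.6 by exhibiting a $G_{k^\ast}$-adjacent pair in $SC_k^{n,l}\times SC_k^{n,l}$ whose images under $\gamma$ are neither equal nor $k$-adjacent, so that (4.6) fails; the one difference is in your favor, since the paper only checks the single instance $SC_8^{2,9}$ with $P=(x_3,x_3)$, $Q=(x_3,x_4)$ (and $9$ is not even prime), whereas your witness $P=(x_2,x_1)$, $Q=(x_2,x_2)$ works uniformly for every $l\ge 5$ and every $k$, $n$, which is what the corollary as stated actually requires.
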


\begin{example} Neither of $(SC_8^{2,4}, \ast_1, \star)$, $(SC_{26}^{3,5}, \ast_1, \star)$, $(SC_{18}^{3,7}, \ast, \star)$, and $(SC_{8}^{2,11}, \ast_1, \star)$ is a $DT$-$8$-ring, a $DT$-$26$-ring, a $DT$-$18$-ring, and a $DT$-$8$-ring, respectively.\\
	In detail, let us explain not to be a non-$DT$-$8$-ring of $(SC_8^{2,4}, \ast_1, \star)$.
	For convenience, let $SC_8^{2,4}:=(x_0, x_1, x_2,x_3)$.
	Then this ring $(SC_8^{2,4}, \ast_1, \star)$ does not satisfy the condition (3) of Definition 5.
	To be specific, while $(x_2,x_2) \in SC_8^{2,4} \times SC_8^{2,4}$ is $G_{k^\ast}$-adjacenct to $(x_2,x_3)$, where $k^\ast=32$, 
	the map
	$$\left \{
	\aligned  & \gamma: (SC_8^{2,4} \times SC_8^{2,4}, G_{32}) \to SC_8^{2,4},\\
	&\text{defined by}\,\,\gamma(x, y)= x \star y\,\,\text{is not}\,\,(G_{32}, 8)\text{-continuous}  \,\text(see (4.6)),
	\endaligned
	\right\}
	$$	 
because $\gamma((x_2,x_2))=x_0$ and $\gamma((x_2,x_3))=x_2$ so that $x_0$ is not $8$-adjacent to $x_0$.
\end{example}

  \begin{figure}[hbt]
     \begin{center}
\epsfig {file=fig_1/fig_1.eps, width=13cm}
\caption{For $k \in \{26, 18, 8\}$, configuration of non-being of a $DT$-$k$-ring structure of $(SC_{26}^{3,5}:=(x_i)_{i \in [0, 4]_{\mathbb Z}}, \ast_1, \star)$ (see Figure 1(c) of \cite{H6}), where $SC_{26}^{3,5}$ is $26$-isomorphic to the set
	$(X, 26)$ and $X:=\{x_0=(0,0,0), x_1=(-1,-1,1),x_2=(-1,0,2), x_3=(0, 1, 2),x_4=(1,1,1)\}$,
$(SC_{18}^{3,7}:=(y_i)_{i \in [0, 6]_{\mathbb Z}}, \ast_1, \star)$ (see (2)), and  $(SC_{8}^{2,11}:=(z_i)_{i \in [0, 10]_{\mathbb Z}}, \ast_1, \star)$ (see (3)), respectively.
Besides, each of $(SC_{26}^{3,5}:=(x_i)_{i \in [0, 4]_{\mathbb Z}}, \ast_1, \star)$, $(SC_{18}^{3,7}:=(y_i)_{i \in [0, 6]_{\mathbb Z}}, \ast_1, \star)$, and
$(SC_{8}^{2,11}:=(z_i)_{i \in [0, 10]_{\mathbb Z}}, \ast_1, \star)$ is a field (see Section 5) instead of a $DT$-$k$-ring.} \label{fig 1}
\end{center}
\end{figure}

\begin{definition} A subring of a  $DT$-$k$-ring is said to be a subset of the $DT$-$k$-ring which is a $DT$-$k$-ring under the induced operations from the given $DT$-$k$-ring.
In the case that a subring $(Y, k, \ast_1, \star)$ of $DT$-$k$-ring $(X, k, \ast_1, \star)$ is $k$-connected, we call it a $k$-connected $DT$-$k$-subring of $(X, k, \ast_1, \star)$.
\end{definition}

\section{\bf A $DT$-$k$-field structure derived from a digital object}\label{s4}

Let us now propose the notion of a $DT$-$k$-field.
 \begin{definition}
  A $DT$-$k$-field ($DT$-$k$-field for brevity), denoted by $(X, k, \ast_1, \star)$, is a commutative $DT$-$k$-ring $(X, k, \ast_1, \star)$ together with a field structure on $X \subset {\mathbb Z}^n$,
  i.e., $(X,\ast_1, \star)$,
   using the so-called additive binary operation $\ast_1$ and multiplicative operation $\star$ satisfying the conditions (1)--(3) of Definition 5.
     Furthermore, let $x_0$ be the identity under the additive operation $\ast_1$ in the $DT$-$k$-ring $(X, k, \ast_1, \star)$.
  Then, \\
  (4) for each $x \in X^\prime:=X \setminus \{x_0\}$, the inverse map with respect to the operation $\star$
$$\delta:  (X^\prime, k) \to (X^\prime, k)\,\,\text{given by}\,\,\delta(x)= x^{-1}\,\,\text{is}\,\,k\text{-continuous},\eqno(5.1)$$
where $x^{-1}$ means the inverse element of $x$ with respect to the operation $\star$.
 \end{definition}

\begin{remark}
 (1) A $DT$-$k$-field $(X, k, \ast_1, \star)$ is a commutative $DT$-$k$-ring $(X, k, \ast_1, \star)$.\\
  (2) A $DT$-$k$-ring $(X, k, \ast_1, \star)$ is an abelian $DT$-$k$-group $(X, k, \ast_1)$.\\
  However, the converse of each of (1) and (2) does not hold.
  \end{remark}

Let us now investigate some examples for $DT$-$k$-fields.

\begin{lemma}  Let $X:=\{-1, 0, 1\}$, after defining the (additive) operation $\ast_1$ on $X$, the set $(X, \ast_1)$ is an abelian group.
\end{lemma}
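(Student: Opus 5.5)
We first have to fix the operation $\ast_1$ on $X=\{-1,0,1\}$, since the statement says only ``after defining'' it. In line with Section 4, we take $\ast_1$ to be addition modulo $3$ written on the representatives $\{-1,0,1\}$. That is, $x \ast_1 y$ is the unique element of $X$ congruent to $x+y \pmod 3$. Explicitly, $0$ is neutral, $1\ast_1 1=-1$, $(-1)\ast_1(-1)=1$, and $1\ast_1(-1)=0$. Equivalently, $X$ is relabelled as the cycle $(x_0,x_1,x_2):=(0,1,-1)$ and the rule (4.3) is used: $x_i\ast_1 x_j=x_{i+j\,(mod\,3)}$. We will record the $3\times 3$ Cayley table, which makes closure and well-definedness immediate.

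The verification runs through the group axioms in order. Closure holds because every entry of the table lies in $X$. The identity is $0$, since $0\ast_1 x=x=x\ast_1 0$ for all $x\in X$. For inverses, $0^{-1}=0$ and $1^{-1}=-1$, and vice versa, because $1\ast_1(-1)=0$. Commutativity follows because the table is symmetric, or simply because $x+y=y+x$ in ${\mathbb Z}$.

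The only step that is not read off directly is associativity. Rather than checking all $27$ triples, we transport the structure along the bijection $\phi: X\to {\mathbb Z}_3$, $\phi(x)=x \,(mod\,3)$, i.e. $0\mapsto 0$, $1\mapsto 1$, $-1\mapsto 2$. This is exactly the map $\phi$ of Theorem 4.4 for $l=3$. By construction $\phi(x\ast_1 y)=\phi(x)+\phi(y)\,(mod\,3)$. Since $({\mathbb Z}_3,+)$ is associative and $\phi$ is a bijective homomorphism, $\ast_1$ is associative as well.

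Alternatively, $\ast_1$ is the operation induced on a complete residue system by the quotient map ${\mathbb Z}\to{\mathbb Z}/3{\mathbb Z}$, and associativity passes to the quotient. Either way, $(X,\ast_1)$ is isomorphic to $({\mathbb Z}_3,+)$ and is therefore an abelian group.

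The main obstacle is really only specifying $\ast_1$ unambiguously. Ordinary integer addition is not closed on $X$, since $1+1=2\notin X$, so the reduction modulo $3$ into $\{-1,0,1\}$ must be stated explicitly. Once that is done, the rest is routine.
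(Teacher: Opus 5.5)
Your proof is correct, but it does not follow the paper's route. The paper's route actually fails.

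\textbf{The paper's operation is not a group law.} The paper takes $\ast_1(x,x')=(x+x')\pmod 2$ with table (5.2). There $0$ is neutral and every sum of two nonzero elements is $0$, i.e.\ $1\ast_1 1=(-1)\ast_1(-1)=1\ast_1(-1)=0$. The paper then calls the group axioms obvious, but they fail:
\begin{itemize}
\item $1$ has two inverses, namely $1$ and $-1$.
\item The row of $1$ in the Cayley table is $(0,1,0)$, which is not a permutation.
\item Associativity fails: $(1\ast_1 1)\ast_1(-1)=0\ast_1(-1)=-1$, while $1\ast_1(1\ast_1(-1))=1\ast_1 0=1$.
\end{itemize}
Structurally, a group of order $3$ is cyclic, so no non-identity element can be its own inverse. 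Moreover, reduction mod $2$ cannot distinguish $1$ from $-1$ in the first place.

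\textbf{Your operation is the forced one.} Addition mod $3$ on the residue system $\{-1,0,1\}$ is not merely one option. Once $0$ is the identity, as (5.2) intends, cancellation leaves exactly one group law on a three-element set, and it is yours. Your transport of structure along $\phi\colon X\to{\mathbb Z}_3$ then settles associativity cleanly. So your argument genuinely establishes the lemma as worded, while the paper's does not.

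\textbf{What this costs downstream.} The paper's table was presumably chosen for the $(G_4,2)$-continuity of $\alpha$ needed in Theorem 5.4, and your law gives that up. For example, $(1,0)$ and $(1,1)$ are $G_4$-adjacent, but $\alpha$ sends them to $1$ and $-1$, which are not $2$-adjacent.

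In fact no group law on $\{-1,0,1\}$ can satisfy that requirement:
\begin{itemize}
\item Continuity of $\alpha$ forces each translation $y\mapsto x\ast_1 y$ to be a $2$-continuous bijection of the path $-1,0,1$.
\item Such a bijection is an automorphism of the path, so it fixes $0$.
\item A translation by a non-identity element has no fixed points.
\end{itemize}
Your lemma is therefore right, but it cannot be plugged into Theorem 5.4.
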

\begin{proof} Consider the so-called additive operation $\ast_1$ on $X$ given by
$$\ast_1: X \times X \to X\,\,\text{defined by}\,\,\ast_1(x, x^\prime)=(x+x^\prime)(mod\,2).$$
  Namely, we have $$\left \{\aligned
    &  (-1)\ast_1(-1)=0, (-1)\ast_1(1)=0= (1)\ast_1(-1),\\
    & 1\ast 1=0, x\ast_1 0=x=0\ast_1 x, \,\,\text{for any}\,\,x \in X.
\endaligned\right\} \eqno(5.2)
$$
Then it is obvious that the pair $(X, \ast_1)$ is an abelian group.
\end{proof}

\begin{lemma}  Let $X:=\{-1, 0, 1\}$.
After defining the (multiplicative) operation $\star$ on $X$, the set $(X, \star)$ is a commutative semigroup with unity $\lq\lq$1".
\end{lemma}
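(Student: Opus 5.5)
The plan is to take $\star$ to be the ordinary integer multiplication restricted to $X$, that is,
$$\star: X \times X \to X\,\,\text{defined by}\,\,\star(x, x^\prime)=x\cdot x^\prime,$$
and then check the semigroup axioms one by one. This choice is deliberate: it needs no modular reduction, because $X$ is already closed under the usual product. That lets every axiom except closure be inherited directly from $({\mathbb Z}, \cdot)$.

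The first step is well-definedness (closure). I would list the full multiplication table in the same style as (5.2):
$$(-1)\star(-1)=1,\quad (-1)\star 1=-1=1\star(-1),\quad 1\star 1=1,\quad x\star 0=0=0\star x,$$
for any $x \in X$. Every entry lies in $X$, so $\star$ is a binary operation on $X$.

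The second step is associativity and commutativity. Since $x\star x^\prime$ equals the integer product $x\cdot x^\prime$, and $X\subset{\mathbb Z}$ is closed under that product, both $(x\star x^\prime)\star x^{\prime\prime}=x\star(x^\prime\star x^{\prime\prime})$ and $x\star x^\prime=x^\prime\star x$ follow at once from the corresponding laws in ${\mathbb Z}$. No case check is needed, although the $27$ triples could also be verified directly from the table.

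The last step is the unity. From the table, $1\star x=x=x\star 1$ for every $x\in X$. For uniqueness, the standard argument applies: if $e$ is another unity, then $e=e\star 1=1$.

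No step here is a genuine obstacle. The only point requiring care is the choice of $\star$ itself, because it must later be distributive over the operation $\ast_1$ of Lemma 5.3, which is addition modulo $2$ with the convention (5.2). For the present lemma this compatibility is not needed, so I would only spot-check it to confirm the choice is sensible. For example,
$$(-1)\star\big((-1)\ast_1 1\big)=(-1)\star 0=0 \quad\text{and}\quad \big((-1)\star(-1)\big)\ast_1\big((-1)\star 1\big)=1\ast_1(-1)=0$$
agree, and the full distributivity check is left to the subsequent ring and field statement.
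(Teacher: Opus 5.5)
Your proof is correct and follows the paper's proof, which likewise writes down the multiplication table (5.3) and declares the commutative-semigroup-with-unity-$1$ properties obvious. The only difference is that the paper nominally defines $x\star x^\prime=(x\cdot x^\prime)\,(\mathrm{mod}\,2)$, whereas you use the plain integer product. Your table coincides with (5.3), and your definition is the one that actually produces it: reducing modulo $2$ would identify $1$ with $-1$, contradicting $(-1)\star 1=-1$. Inheriting associativity and commutativity from $(\mathbb{Z},\cdot)$ is also a cleaner justification than the paper's appeal to obviousness.
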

\begin{proof} Consider the so-called multiplicative operation defined by
 $$\star: X \times X \to X\,\,\text{defined by}\,\,\star(x, x^\prime)=(x\cdot x^\prime)(mod\,2).$$
 Namely,  
   $$\left \{\aligned
    &  (-1)\star (-1)=1, (-1)\star 1=-1= 1\star(-1),\\
    &  x\star 1=x=1\star x, \,\,\text{for any}\,\,x(\neq 0) \in X\,\,\text{and}\\
    &  x\star 0=0=0\star x, \,\,\text{for any}\,\,x \in X.
\endaligned\right\} \eqno(5.3)
$$
Then it is obvious that the pair $(X, \star)$ is a commutative semigroup with unity $\lq\lq$$1"$. 
\end{proof}

Using the two operations $\ast_1$ and $\star$ in Lemmas 5.2 and 5.3, we obtain the following:

\begin{theorem} Let $X=\{-1, 0, 1\}$ and consider $(X, 2)$.
After defining the (additive) operation $\ast_1$  on $X$ in Lemma 5.2 and (multiplicative) operation $\star$ on $X$ in Lemma 5.3, the set $(X, \ast_1, \star)$ is a $DT$-$2$-field.
\end{theorem}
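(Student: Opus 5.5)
The plan is to check the ring and field axioms for $(X,\ast_1,\star)$, and then verify conditions (1)--(4) of Definitions 5 and 7 one at a time. Each continuity condition will be reduced to a finite check on the digital line $(X,2)=([-1,1]_{\mathbb Z},2)$. For the algebraic part I would take the abelian group $(X,\ast_1)$ from Lemma 5.2 and the commutative semigroup $(X,\star)$ with unity $1$ from Lemma 5.3 as given. I would then verify distributivity $x\star(y\ast_1 z)=(x\star y)\ast_1(x\star z)$ directly from the tables (5.2) and (5.3). Since $0$ is absorbing for $\star$, only the cases $x\in\{-1,1\}$ need to be checked, and each of these is a short table lookup. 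For the field structure, note that the additive identity is $x_0=0$, so $X^\prime=\{-1,1\}$. By (5.3), $1\star 1=1$ and $(-1)\star(-1)=1$, so every element of $X^\prime$ has a $\star$-inverse, namely itself.

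For the topological conditions, the key step is to reduce $(G_{k^\ast},k)$-continuity of $\alpha$ and $\gamma$ to slice-wise $2$-continuity. By (3.5), for $p=(a,b)$ we have $N_{G_{k^\ast}}(p,1)=(N_2(a,1)\times\{b\})\cup(\{a\}\times N_2(b,1))$, so the condition of Proposition 3.10 at $p$ splits into two conditions, one on each slice through $p$. Theorem 3.3 then shows that a map $f:(X^2,G_{k^\ast})\to(X,2)$ is $(G_{k^\ast},2)$-continuous if and only if, for every $a\in X$, the self-maps $x\mapsto f(a,x)$ and $x\mapsto f(x,a)$ of $(X,2)$ are $2$-continuous. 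Both $\ast_1$ and $\star$ are commutative, so it suffices to check the maps $L_a(x)=a\ast_1 x$ and $M_a(x)=a\star x$ for $a\in\{-1,0,1\}$. The only $2$-adjacent pairs in $X$ are $\{-1,0\}$ and $\{0,1\}$, so by Proposition 2.1 each check amounts to confirming that the images of these two pairs are equal or $2$-adjacent.

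The additive slices are as follows. $L_0$ is the identity. $L_1$ sends $(-1,0,1)\mapsto(0,1,0)$. $L_{-1}$ sends $(-1,0,1)\mapsto(0,-1,0)$. In each case the two adjacent pairs go to adjacent pairs, which gives condition (1). The multiplicative slices are as follows. $M_0$ is constant. $M_1$ is the identity. $M_{-1}$ sends $(-1,0,1)\mapsto(1,0,-1)$, which is a reflection and therefore $2$-continuous. This gives condition (3). For condition (2), every element is its own $\ast_1$-inverse by (5.2), so $\beta=\mathrm{id}_X$, which is $2$-continuous. For condition (4), $\delta=\mathrm{id}_{X^\prime}$ is trivially $2$-continuous, even though $X^\prime=\{-1,1\}$ is not $2$-connected.

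I expect the main obstacle to be the reduction step, namely justifying carefully through (3.5), Theorem 3.3 and Proposition 3.10 that continuity on $(X^2,G_4)$ is exactly slice-wise continuity. I would present this first as a general observation. After that, the remaining verification is only the finite table check described above.
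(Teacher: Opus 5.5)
\textbf{The gap is Lemma 5.2.} Your continuity work is sound: the slice reduction via (3.5) is correct, and your tables for $L_{\pm1}$ and $M_{-1}$ are correct. The problem is the algebraic step you take as given, which is also exactly what the paper's proof rests on (``Owing to Lemmas 5.2 and 5.3 \dots\ the set $(X,\ast_1,\star)$ is a field''). Lemma 5.2 is false. From table (5.2), $(1\ast_1 1)\ast_1(-1)=0\ast_1(-1)=-1$, whereas $1\ast_1(1\ast_1(-1))=1\ast_1 0=1$, so $\ast_1$ is not associative. Moreover $1\ast_1 1=0=1\ast_1(-1)$, so cancellation fails and $1$ has two $\ast_1$-inverses. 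That makes your $\beta=\mathrm{id}_X$ an arbitrary choice rather than the inverse map. Your own remark that both nonzero elements are self-inverse should have been a warning: a group of order $3$ has no element of order $2$, and a field with three elements has characteristic $3$, where $1+1\neq 0$. So $(X,\ast_1,\star)$ is not even a ring. Checking distributivity and conditions (1)--(4) for these tables cannot prove the statement.

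\textbf{The statement cannot be repaired.} Substituting a genuine addition does not help. Every group structure on a three-element set is cyclic of order $3$. For a non-identity $a$, the translation $x\mapsto a\ast x$ has no fixed points, so it is a $3$-cycle. A $3$-cycle permutes the three pairs $\{-1,0\},\{0,1\},\{-1,1\}$ cyclically, so it carries one of the two $2$-adjacent pairs onto the non-adjacent pair $\{-1,1\}$. By your own slice reduction, $\alpha$ then violates condition (1) of Definition 5. Concretely, take addition modulo $3$ on the representatives $\{-1,0,1\}$; together with the multiplication (5.3), this is exactly the field ${\mathbb Z}_3$. The points $(1,0)$ and $(1,1)$ are $G_4$-adjacent, but $\alpha(1,0)=1$ and $\alpha(1,1)=-1$ are not $2$-adjacent. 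Hence $([-1,1]_{\mathbb Z},2)$ carries no $DT$-$2$-group structure at all, let alone a $DT$-$2$-field structure. The statement is false, and no proof along your lines, or the paper's, can succeed. By contrast, on $\{0,1\}$ (Theorem 5.5) the operation $\ast_1$ really is addition in ${\mathbb Z}_2$, and your style of verification does go through there.
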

{\em Proof:} Owing to Lemmas 5.2 and 5.3 and the property of (5.3), the set $(X, \ast_1, \star)$ is a field.
Let us now define the following three maps $\alpha, \beta$, and $\gamma$, i.e., \\
$(1)\, \alpha$ from $(X^2, G_4)$ onto $(X, 2)$ as in Definition 5(1) (or Definition 7(1)),\\
$(2)\, \beta$ as a self-map of $(X, 2)$ as in Definition 5(2) (or Definition 7(2)), and \\
 $(3)\, \gamma$ from $(X^2, G_4)$ onto $(X, 2)$ as in (4.6).\\
Furthermore, using the operations $\ast_1$ of (5.2) and $\star$ of (5.3),
we need to prove the $(G_{4}, 2)$-continuity of $\alpha$, the $2$-continuity of $\beta$, and the $(G_{4}, 2)$-continuity of $\gamma$, as follows:
Namely, we concretely obtain the following properties based on the (1)--(3) above:\\
$(1)^\prime$ For each $(x, y) \in X^2$, the multiplication with respect to $\ast_1$ of (5.2)
$$\alpha: (X^2, G_{4}) \to (X, 2)\,\,\text{defined by}\,\,\alpha(x, y)= x \ast_1 y\,\,\text{is}\,\,(G_{4}, 2)\text{-continuous},$$
$(2)^\prime$ for any $x \in X$, the inverse map with respect to the operation $\ast_1$ of (5.2)
$$\beta: (X, 2) \to (X, 2)\,\,\text{given by}\,\,\beta(x)= x^{-1}\,\,\text{is}\,\,2\text{-continuous, and}\,\,\,\,\,\,\,\,\,\,\,\,$$
$(3)^\prime$ for each $(x, y) \in X^2$, the multiplication with respect to the operation $\star$ of (5.3)
$$\,\gamma: (X^2, G_{4}) \to (X, 2)\,\,\text{defined by}\,\,\gamma(x, y)= x \star y\,\,\text{is}\,\,(G_{4}, 2)\text{-continuous (see Figure 2)}.$$
Finally, let us now examine if the inverse map under the operation $\star$ as a self-map of $(X^\prime, 2)$ is $2$-continuous, where $X^\prime=\{-1,1\}$, as follows:
 (4)  for any $x \in X^\prime$,
 $$\left \{\aligned
    & \delta:  (X^\prime, 2) \to (X^\prime, 2)\,\,\text{given by}\,\,\delta(x)= x^{-1}\,\,\text{under the operation}\,\,\star\\
    & \text{is clearly}\,\,2\text{-continuous (see Proposition 2.1)},
\endaligned\right\}
$$
where $x^{-1}$ means the inverse element of $x$ with respect to the operation $\star$.\\
Owing to the satisfaction of the four properties (1)--(4) above, we conclude that  $(X, 2, \ast_1, \star)$ is a $DT$-$2$-field. $\Box$\\

  \begin{figure}[hbt]
     \begin{center}
\epsfig {file=fig_1/fig_2.eps, width=9cm}
\caption{Configuration of the $(G_4, 2)$-continuity of the multiplication $\gamma:(X^2, G_4=4) \to (X, 2)$ of $(3)^\prime$ in the proof of Theorem 5.4 for being a $DT$-$2$-field of $(X, 2, \ast_1, \star)$.
Let $p=(0,0)$. Then $\gamma(N_{G_4}(p,1))=\{0\}, \gamma(A)=1, A:=\{(-1,-1), (1,1)\}$, and $\gamma(B)=-1, B:=\{(-1,1), (1,-1)\}$ because $N_{G_4}(p,1)=N_4(p,1)$.} \label{fig 2}
\end{center}
\end{figure}

Let us now suggest another example for a $DT$-$2$-field.
\begin{theorem} Let $Y:=\{0, 1\}$ and consider $(Y, 2)$.
After defining the (additive) operation $\ast_1$  on $Y$ as in Lemma 5.2 and (multiplicative) operation $\star$ on $Y^\prime$ in Lemma 5.3, the set $(Y, 2, \ast_1, \star)$ is a $DT$-$2$-field.
\end{theorem}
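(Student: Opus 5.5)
The plan is to mirror the proof of Theorem 5.4, now on the two-point image $(Y,2)$ with $Y=\{0,1\}\subset{\mathbb Z}$. The operations are restricted to $Y$: $x\ast_1 x'=(x+x')(mod\,2)$ and $x\star x'=(x\cdot x')(mod\,2)$.

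\textbf{Field structure.} First I will verify that $(Y,\ast_1,\star)$ is a field; in fact it is just ${\mathbb Z}_2$. The element $0$ is the additive identity and $1\ast_1 1=0$, so $(Y,\ast_1)$ is an abelian group, as in Lemma 5.2. The operation $\star$ is associative and commutative with unity $1$, as in Lemma 5.3. Distributivity is inherited from the ring $({\mathbb Z}_2,+,\cdot)$, for example via the identification used in Theorem 4.4. The only nonzero element $1$ is its own $\star$-inverse, so $(Y,\ast_1,\star)$ is a field.

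\textbf{Topological conditions.} I will check conditions (1)--(4) of Definitions 5 and 7 with $k=2$ and $k^\ast=k(1,2)=4$. The key observation is that $Y$ is $2$-connected: $0$ and $1$ are $2$-adjacent, so $N_2(y,1)=Y$ for each $y\in Y$.

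\begin{itemize}
\item[(2)] The inverse map $\beta$ is the identity, since $0^{-1}=0$ and $1^{-1}=1$ under $\ast_1$. It is therefore $2$-continuous.
\item[(4)] Here $Y'=Y\setminus\{0\}=\{1\}$, and $\delta$ is the identity on a singleton, so it is $2$-continuous.
\item[(1) and (3)] By Proposition 3.10, $(G_4,2)$-continuity of $\alpha$ and $\gamma$ at a point $p$ only requires $\alpha(N_{G_4}(p,1))\subset N_2(\alpha(p),1)$, and likewise for $\gamma$. Since every $N_2(\cdot,1)$ equals $Y$ and both maps take values in $Y$, both inclusions hold automatically. So $\alpha$ and $\gamma$ are $(G_4,2)$-continuous. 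One can also note that $(Y^2,G_4)=(Y^2,4)$ is the $4$-cycle-like unit square, as in Figure 2, but this is not needed.
\end{itemize}

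\textbf{Main obstacle.} There is no real difficulty. The only care needed is to read the statement's "$\star$ on $Y'$" as $\star$ on all of $Y$, with inverses taken on $Y'$. Any map into a set whose $k$-neighborhoods are the whole set is continuous, and that disposes of (1)--(4) at once. Combining the field structure with (1)--(4) gives that $(Y,2,\ast_1,\star)$ is a $DT$-$2$-field.
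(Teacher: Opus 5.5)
Your proof is correct and follows the same route as the paper's: you show that $(Y,\ast_1,\star)$ is a field (it is literally ${\mathbb Z}_2$), and you then check conditions (1)--(4) of Definitions 5 and 7 for $\alpha$, $\beta$, $\gamma$, $\delta$. Your observation that $N_2(0,1)=N_2(1,1)=Y$, so that every map into $(Y,2)$ or into the singleton $Y'=\{1\}$ is automatically continuous, is exactly the justification the paper's proof leaves implicit, and checking the group axioms directly on $\{0,1\}$ is safer than relying on Lemma 5.2, whose table on $\{-1,0,1\}$ (with $(-1)\ast_1(-1)=0=(-1)\ast_1 1$) does not actually define a group.
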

{\em Proof:}
By using a method similar to the proof of Theorem 5.4, the proof is completed.
To be specific, owing to Lemmas 5.2 and 5.3, the set $(Y, \ast_1, \star)$ is a field.
Let us now define the maps $\alpha:(Y^2, G_4) \to (Y, 2)$ as in Definition 5(1) (or Definition 7(1)) and $\beta$ as a self-map of $(Y, 2)$ as in Definition 7(2), and  $\gamma:(Y^2, G_4) \to (Y, 2)$ as in Definition 5(3) (or Definition 7(3)).
Then, we need to prove the $(G_{4}, 2)$-continuity of $\alpha$, the $2$-continuity of $\beta$, the $(G_{4}, 2)$-continuity of $\gamma$, as follows:
Namely, we obtain the following:\\
(1) For each $(x, y) \in Y^2$, the multiplication with respect to $\ast_1$ of (5.2)
$$\,\alpha: (Y^2, G_{4}) \to (Y, 2)\,\,\text{given by}\,\,\alpha(x, y)= x \ast_1 y\,\,\text{is}\,\,(G_{4}, 2)\text{-continuous},$$
(2) for any $x \in Y$, the inverse map with respect to the operation $\ast_1$
$$\,\beta: (Y, 2) \to (Y, 2)\,\,\text{defined by}\,\,\beta(x)= x^{-1}\,\,\text{is}\,\,2\text{-continuous,}\,\,\,\,\,\,\,\,\,\,\,$$
(3) for each $(x, y) \in Y^2$,  the multiplication with respect to the operation $\star$ as in (5.3)
$$ \,\gamma: (Y^2, G_{4}) \to (Y, 2)\,\,\text{given by}\,\,\gamma(x, y)= x \star y\,\,\text{is}\,\,(G_{4}, 2)\text{-continuous}.$$
Finally, let us examine if the inverse map under the operation $\star$ as a self-map of $(Y^\prime, 2)$ is $2$-continuous, where $Y^\prime=\{1\}$, as follows:\\
(4) For any $x \in Y^\prime$,
$$\,\delta:  (Y^\prime, 2) \to (Y^\prime, 2)\,\,\text{given by}\,\,\delta(x)= x^{-1}\,\,\text{under the operation}\,\,\star\,\,\text{is clearly}\,\,2\text{-continuous},$$
where $x^{-1}$ means the inverse element of $x$ with respect to the operation $\star$.\\
Owing to the satisfaction of the four properties (1)--(4) above, we conclude that  $(Y, \ast_1, \star)$ is a $DT$-$2$-field.
$\Box$\\

Let us now define a subfield of a $DT$-$k$-field.
\begin{definition} A subfield of a  $DT$-$k$-field (or $DT$-$k$-subfield) is said to be a subset of the $DT$-$k$-field which is a $DT$-$k$-field under the induced operations from the given $DT$-$k$-field.
In the case that a subfield $(Y, k, \ast_1, \star)$ of $DT$-$k$-field $(X, k, \ast_1, \star)$ is $k$-connected, we call it a $k$-connected subfield of $(X, k, \ast_1, \star)$.
\end{definition}

\begin{example} The $DT$-$2$-field $(Y, 2, \ast_1, \star)$ in Theorem 5.5 is a $2$-connected subfield of the $DT$-$2$-field $(X, \ast_1, \star)$  of Theorem 5.4.
\end{example}

Based on the $DT$-$k$-group structure of $(SC_k^{n,l}, \ast_1)$ as in Theorem 4.1, let us now examine if $(SC_k^{n,l}, \ast_1, \star)$ is a  $DT$-$k$-field with the multiplicative operation $\star$ in (4.4).

\begin{lemma} For any $k$-adjacency of ${\mathbb Z}^n$ and any $l$ with $5 \leq l \in \mathcal{P}$, $(SC_k^{n,l}, \ast_1, \star)$ is a field.
\end{lemma}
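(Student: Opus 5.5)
The plan is to reduce the statement to the classical fact that $\mathbb{Z}_l$ is a field when $l$ is prime, using the ring isomorphism already established. By Theorem 4.4, $(SC_k^{n,l}, \ast_1, \star)$ is a commutative ring with unity $x_1$, whose additive identity is $x_0$. By Theorem 4.5, the map $\phi(x_i)=i \pmod l$ is a bijection that respects both operations. So $\phi$ is a ring isomorphism onto $({\mathbb Z}_l, +, \cdot)$. Being a field is invariant under ring isomorphism, so it will suffice to show that $({\mathbb Z}_l,+,\cdot)$ is a field for prime $l$.

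Two points need checking. First, I will note that $x_1\neq x_0$, since $l\geq 5$ and the points of the sequence $(x_i)_{i\in[0,l-1]_{\mathbb Z}}$ are distinct. Hence the ring is nontrivial.

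Second, I will produce multiplicative inverses directly, so the argument does not depend on how the isomorphism is phrased. Take $x_i$ with $i\in[1,l-1]_{\mathbb Z}$. Since $l$ is prime, $\gcd(i,l)=1$. Bézout then gives integers $a,b$ with $ai+bl=1$. Let $j:=a \pmod l\in[1,l-1]_{\mathbb Z}$. By (4.4), $x_i\star x_j=x_{ij \pmod l}=x_1$, and commutativity gives $x_j\star x_i=x_1$ as well. Thus every element of $SC_k^{n,l}\setminus\{x_0\}$ is a unit, so the ring is a field.

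The key step is where primality enters, namely the existence of inverses. It is worth contrasting with the composite case: for $l=ab$ with $1<a,b<l$ one has $x_a\star x_b=x_0$, giving zero divisors. One further point must be verified: the conclusion holds for every $k$-adjacency of ${\mathbb Z}^n$. This follows because the operations (4.3) and (4.4) depend only on the indices $i$, not on $k$ or $n$. Finally, the bound $l\geq 5$ is used only to guarantee that a simple $k$-cycle $SC_k^{n,l}$ with that many points exists (recall $l\geq 4$ in its definition), and that $l$ is an odd prime.
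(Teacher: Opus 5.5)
Your proposal is correct and follows essentially the same route as the paper's proof: it uses the commutative-ring-with-unity structure from Theorem 4.4 together with the existence of a $\star$-inverse for every $x_i\neq x_0$, and you supply the B\'ezout/primality justification (and the check $x_1\neq x_0$) that the paper only asserts. One small aside: a prime $l\geq 5$ does not by itself guarantee that $SC_k^{n,l}$ exists (e.g.\ a simple closed $2n$-curve in ${\mathbb Z}^n$ always has even length), so in such cases the lemma is vacuous, but this does not affect your argument.
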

{\em Proof:}
(1) $(SC_k^{n,l}, \ast_1)$ is an abelian group with the identity $x_0$ under the operation $\ast_1$ in (4.3).\\
(2) $(SC_k^{n,l}, \ast_1, \star)$ is a commutative ring with unity $x_1$ under the operation $\star$ in (4.4).\\
(3) Any element $x_i(\neq x_0) \in SC_k^{n,l}$ has a unique inverse element under the operation  $\star$. $\Box$

\begin{theorem} For $l\in \mathcal{P}\setminus \{2,3\}$, the field $(SC_k^{n,l}, \ast_1, \star)$ is isomorphic to $({\mathbb Z}_l, +, \cdot)$.
 \end{theorem}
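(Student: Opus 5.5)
The plan is to reduce the statement to Theorem 4.4 and check that the ring isomorphism found there respects the field structure. Theorem 4.4 already holds for every $l$, so it gives in particular that the commutative ring $(SC_k^{n,l},\ast_1,\star)$ is isomorphic to $({\mathbb Z}_l,+,\cdot)$. Here I would make explicit that the two maps $\phi$ and $\psi$ of (4.8) are the same bijection
$$\Phi: SC_k^{n,l}\to {\mathbb Z}_l,\qquad \Phi(x_i)=i \pmod l .$$

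The key steps, in order, are as follows.

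\textbf{Step 1: bijectivity.} Since the sequence $(x_i)_{i\in[0,l-1]_{\mathbb Z}}$ consists of $l$ distinct points, $\Phi$ is a bijection onto $[0,l-1]_{\mathbb Z}={\mathbb Z}_l$.

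\textbf{Step 2: preservation of both operations.} By (4.3) and (4.4),
$$\Phi(x_i\ast_1 x_j)=\Phi(x_{i+j \,(mod\, l)})=i+j \pmod l,\qquad \Phi(x_i\star x_j)=i\cdot j \pmod l .$$
Hence $\Phi$ preserves both operations. It also sends the additive identity $x_0$ to $0$ and the unity $x_1$ to $1$.

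\textbf{Step 3: the field structure.} For $l$ prime with $l\ge 5$, Lemma 5.9 says $(SC_k^{n,l},\ast_1,\star)$ is a field, and $({\mathbb Z}_l,+,\cdot)$ is a field because $l$ is prime. A ring isomorphism between fields is automatically a field isomorphism. Concretely, it maps multiplicative inverses to multiplicative inverses: if $x_i\star x_j=x_1$, then $\Phi(x_i)\cdot\Phi(x_j)=1$ in ${\mathbb Z}_l$. So $\Phi$ restricted to $SC_k^{n,l}\setminus\{x_0\}$ is a group isomorphism onto ${\mathbb Z}_l^{\times}$.

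I expect no real obstacle. The only point needing care is Lemma 5.9's claim that every $x_i\neq x_0$ has a $\star$-inverse, which the paper asserts without proof. If needed, I would supply it by Bézout's identity: since $\gcd(i,l)=1$ for $1\le i\le l-1$ with $l$ prime, there are integers $a,b$ with $ai+bl=1$, and then $x_{a \,(mod\, l)}$ is the inverse of $x_i$. Alternatively, the isomorphism itself transports the inverses from ${\mathbb Z}_l$, which makes the field property of $SC_k^{n,l}$ a consequence rather than a hypothesis.

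Finally, the restriction $l\notin\{2,3\}$ is only there because $SC_k^{n,l}$ requires $l\ge 4$; it plays no role in the algebra.
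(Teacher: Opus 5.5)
Your proposal is correct and is essentially the paper's proof. The paper also takes the field property as given and notes that the map $x_i\mapsto i \pmod l$ of (4.8) is a bijection preserving both operations; your remark that $\phi$ and $\psi$ are one map $\Phi$, and your B\'ezout argument for $\star$-inverses, only make explicit what the paper calls clear (minor citation slip: the ring isomorphism is Theorem 4.5, not Theorem 4.4, and the field lemma is Lemma 5.7).
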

 {\em Proof:}
  First  of all, owing to the operations of (4.3) and (4.4), it is clear that
  $(SC_k^{n,l}:=(x_i)_{i \in [0, l-1]_{\mathbb Z}}, \ast_1, \star)$ is a field with unity $x_1$.\\
Next, to prove a field isomorphism between  $(SC_k^{n,l}, \ast_1, \star)$  and $({\mathbb Z}_l, +, \cdot)$, let us consider the maps $\phi$ and $\psi$ in (4.8).   
 Then, each of $\phi$ and $\psi$ is a homomorphic bijection. $\Box$

\begin{remark} For any $k$-adjacency of ${\mathbb Z}^n$ and any $l$ with $4 \leq l \in \mathcal{P}\setminus \{2,3\}$,
$(SC_k^{n,l}, \ast_1, \star)$ is not a  $DT$-$k$-field because $(SC_k^{n,l}, \ast_1, \star)$ is not a  $DT$-$k$-ring (see Remark 4.6).
\end{remark}

\begin{theorem} (1) The $n$-dimensional digital cube $(X^n,\ast_1, \star)$ is a $DT$-$2n$-field, where $X:=\{-1, 0, 1\}$.\\
(2) $(Y^n,\ast, \star)$ is a $DT$-$2n$-field, where $Y:=\{0, 1\}$.
\end{theorem}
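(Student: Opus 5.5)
The plan is to reduce everything to the one-dimensional results, Theorems 5.4 and 5.5, by working coordinatewise. For $p=(p_1,\dots ,p_n)$ and $q=(q_1,\dots ,q_n)$ in $X^n$ I will define $p\ast_1 q:=(p_i\ast_1 q_i)_{i\in[1,n]_{\mathbb Z}}$ and $p\star q:=(p_i\star q_i)_{i\in[1,n]_{\mathbb Z}}$, using the operations of (5.2) and (5.3). The algebraic axioms then follow coordinatewise from Lemmas 5.2 and 5.3:
\begin{itemize}
\item $(X^n,\ast_1)$ is an abelian group with identity $(0,\dots,0)$;
\item $(X^n,\star)$ is a commutative semigroup with unity $(1,\dots,1)$;
\item distributivity holds because it holds in each factor.
\end{itemize}
The same construction is used for $Y^n$. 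Since $X^n$ and $Y^n$ are $2n$-connected, the relevant product space is $((X^n)^2,G_{4n})$ with $k=k(1,n)=2n$ and $k^\ast=k(1,2n)=4n$.

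The topological conditions (1)--(3) of Definition 5 rest on one observation from (3.5). A point $(p,q)\in (X^n)^2$ is $G_{4n}$-adjacent to $(p',q')$ exactly when one of the two entries is fixed and the other moves to a $2n$-neighbour. A $2n$-neighbour differs in a single coordinate $i$, and there by $\pm1$.

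For $\alpha$ and $\gamma$, the images $\alpha(p,q)$ and $\alpha(p',q')$ (respectively $\gamma(p,q)$ and $\gamma(p',q')$) therefore agree in every coordinate except possibly the $i$-th. In the $i$-th coordinate, $(p_i,q_i)$ and $(p'_i,q'_i)$ are $G_4$-adjacent in $X^2$, so the $(G_4,2)$-continuity established in Theorem 5.4 (resp. Theorem 5.5) makes the two values equal or $2$-adjacent. Hence the images are equal or $2n$-adjacent, and Proposition 3.10 gives $(G_{4n},2n)$-continuity. The same single-coordinate argument, combined with the $2$-continuity of $\beta$ on $(X,2)$ and Proposition 2.1, gives $2n$-continuity of $\beta$ on $X^n$.

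The main obstacle is condition (4) together with the field axiom itself. For $n\ge 2$, the coordinatewise product has zero divisors: for example $(1,0,\dots)\star(0,1,\dots)=0$. So not every element of $X^n\setminus\{0\}$ has a $\star$-inverse, and the coordinatewise structure is only a commutative ring with unity.

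My first attempt would be to verify (4) on the set of $\star$-invertible elements $\{-1,1\}^n$ (resp. $\{1\}^n$). There $\delta$ is the identity, since each $\pm1$ is its own inverse under (5.3), so $\delta$ is trivially $2n$-continuous. All of (1)--(3) and the restricted form of (4) would then be established. For the field part I would have to either:
\begin{itemize}
\item read the statement in this weakened sense, with invertibility required only of units; or
\item replace the coordinatewise product by a multiplication making $X^n$ a genuine field.
\end{itemize}
The second option is impossible for $\sharp X^n=3^n$ only in the sense that one would need $\mathbb F_{3^n}$, whose multiplication is not coordinatewise and would jeopardise the $(G_{4n},2n)$-continuity.

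I expect this reconciliation to be the genuinely delicate step. The continuity checks themselves are routine once the single-coordinate reduction is in place.
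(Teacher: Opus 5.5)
Your coordinatewise reduction of the continuity of $\alpha$ and $\gamma$ is sound: a $G_{4n}$-step changes one coordinate of one entry by $\pm1$, and the one-dimensional tables are $(G_4,2)$-continuous. This is the same coordinatewise route the paper takes via (5.4), where the four continuities are simply asserted. The gap is the algebraic part, and it cannot be closed, because the statement is false.

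First, your bullet ``$(X^n,\ast_1)$ is an abelian group'' already fails for $n=1$. In (5.2), $1\ast_1 1=0=(-1)\ast_1 1$, so $1$ has two inverses. Associativity also fails: $((-1)\ast_1 1)\ast_1 1=1\neq -1=(-1)\ast_1(1\ast_1 1)$. So Lemma 5.2 and Theorem 5.4 cannot be invoked, and $\beta$ is not even well defined on $X$.

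Second, as you note, for $n\ge 2$ the product (5.4) has zero divisors, e.g. $(1,0,\dots,0)\star(0,1,0,\dots,0)=(0,\dots,0)$. Hence neither $X^n$ nor $Y^n$ is a field, and the map $\delta$ of Definition 7 does not exist on $X^n\setminus\{0\}$. The paper's formula $\delta(p)=(p_1^{-1},\dots,p_n^{-1})$ is undefined whenever some $p_i=0$. Restricting condition (4) to the units $\{-1,1\}^n$ or $\{1\}^n$ proves a weaker notion, not Definition 7.

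Your second option is not merely ``jeopardised''; it is impossible for every field structure. Suppose a cube $Z$ (either one) with $k=2n$ carried any $DT$-$2n$-field structure, with zero $0_F$.

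\begin{itemize}
\item By (3.5) (cf. Theorem 3.12), for fixed $q$ the map $p\mapsto p\ast_1 q$, and for $q\neq 0_F$ the map $p\mapsto p\star q$, is a $2n$-continuous bijection of a finite graph.
\item Such a map is a graph automorphism: it sends edges injectively to edges, hence bijectively.
\item The maps $p\mapsto p\star c$ with $c\neq 0_F$ fix $0_F$ and act transitively on $Z\setminus\{0_F\}$. So all nonzero points would lie at the same graph distance from $0_F$.
\item For $n\ge 2$, every point of $\{0,1\}^n$ or $\{-1,0,1\}^n$ has points at distance $1$ and at distance $2$ from it. This is a contradiction.
\item For $Z=\{-1,0,1\}$ and $n=1$, a translation $p\mapsto p\ast_1 a$ with $a\neq 0_F$ would be a fixed-point-free automorphism of the path $-1,0,1$. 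But every automorphism of that path fixes $0$.
\end{itemize}

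Hence (1) fails for every $n$, and (2) fails for every $n\ge 2$. Only (2) with $n=1$ survives: the field $\mathbb{Z}_2$ on $\{0,1\}$, i.e. Theorem 5.5. The correct outcome of your analysis is therefore a counterexample to the statement, not a proof of it.
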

{\em Proof:}
(1) First of all, consider the $G_{k^\ast}$-adjacency of $X^{2n}$ derived from $(X^n, 2n)$, where $k^\ast=k(1,2n)=4n$.
Then we need to consider the maps $\alpha, \beta, \gamma$, and $\delta$ as stated in Definitions 5 and 7 such that each of them satisfies the corresponding continuity.
More precisely, for $p=(p_1, \cdots, p_n)$ and $q=(q_1, \cdots, q_n)$ in $ X^{n}$, we obtain the maps supporting  a $DT$-$2n$-field of $(X^n,\ast_1, \star)$ with the following operations $\ast_1$ and $\star$ on $X^n$
using the operations in (5.2) and (5.3).

$$\left \{\aligned
    & (1)\,\ast_1: X^n \times X^n \to X^n\,\, \text{given by}\\
& \ast_1(p,q)=p\ast_1 q=(p_1 \ast_1 q_1, \cdots, p_n \ast_1 q_n)\,\,\text{and}\\
& (2)\star: X^n \times X^n \to X^n\,\, \text{defined by}\\
&\star(p,q)=p\star q=(p_1 \star q_1, \cdots, p_n \star q_n).
   \endaligned\right\} \eqno(5.4)
$$
Using the operations in (5.4), we now prove the following:
$$\left \{\aligned
    & (1)\,\alpha: (X^n \times X^n, G_{k^\ast}=4n)  \to (X^n, 2n)\,\, \text{given by}\\
& \alpha(p,q)=p\ast_1 q=(p_1 \ast_1 q_1, \cdots, p_n \ast_1 q_n)\,\,\text{is}\,\,(G_{k^\ast}, 2n)\text{-continuous,}\\
&\text{where the operation}\,\, p_i \ast_1 q_i, i \in [1,n]_{\mathbb Z},\,\,\text{is that in (5.2)}.
   \endaligned\right\}
$$

$$\left \{\aligned
    & (2)\beta: (X^n, 2n)  \to (X^n, 2n)\,\,\text{given by}\\
&\beta(p)=(p_1^{-1}, \cdots, p_n^{-1})\,\, \text{is}\,\,2n\text{-continuous (see Proposition 2.1),}\\
& \text{where} \,\,p_i^{-1}\,\,\text{is the inverse element with respect to the operation}\,\,\ast_1\,\,\text{in (5.2)}.
   \endaligned\right\}
$$
   $$\left \{\aligned
    & (3) \text{Using the operation of (5.3),} \\
    & \gamma: (X^n \times X^n, G_{k^\ast}=4n)  \to (X^n, 2n)\,\, \text{defined by}\\
 & \gamma(p,q)=p\star q=(p_1 \star q_1, \cdots, p_n \star q_n)\,\,\text{is}\,\,(G_{k^\ast}, 2n)\text{-continuous.}
   \endaligned\right\}
   $$
Finally,
   $$\left \{\aligned
    & (4) \, \delta:  (X^n, 2n)  \to (X^n, 2n)\,\,\text{given by}\\
    &\delta(p)=(p_1^{-1}, \cdots, p_n^{-1})\,\, \text{is}\,\,2n\text{-continuous, where} \,\,p_i^{-1}, i \in [1,n]_{\mathbb Z},\\
    &\text{is the inverse element with respect to the operation}\,\,\star\,\,\text{in (5.3)}.
   \endaligned\right\}
$$
$\Box$

\begin{remark} A finite digital plane $(X, k), X\subset {\mathbb Z}^2$, need not be a $DT$-$k$-field.
\end{remark}

In view of Theorem 5.10 and Remark 5.11, it is clear that not every digital cube $(Y^n, 2n)$ is always a $DT$-$2n$-field.
However, the following is obtained.

\begin{remark} (1) Assume  $(W^n, 2n)$, where $W:=[a, a+2]_{\mathbb Z}$ and $n \in {\mathbb N}$. After transforming $(W^n, 2n)$ onto $(X^n, 2n)$ via a $2n$-isomorphism, where $X:=[-1, 1]_{\mathbb Z}$, 
	the transformed digital image is a $DT$-$2n$-field.\\
	(2) Assume  $(V^n, 2n)$, $V:=[a, a+1]_{\mathbb Z}$ and $n \in {\mathbb N}$.
	After transforming $(V^n, 2n)$ onto $(Y^n, 2n)$ in terms of a $2n$-isomorphism, where $Y:=[0, 1]_{\mathbb Z}$, 
	the transformed digital image is a $DT$-$2n$-field.
	\end{remark}  

\section{\bf Isomorphisms for $DT$-$k$-groups,  $DT$-$k$-rings, and  $DT$-$k$-fields,  and development of pseudo $DT$-$k$-rings and pseudo $DT$-$k$-fields}\label{s5}

This section initially introduces the notions of isomorphisms for $DT$-$k$-groups,  $DT$-$k$-rings, and  $DT$-$k$-fields.

\begin{definition} Given two $DT$-$k_i$-groups $(X, k_1, \ast_1)$ and $(Y, k_2, \ast_2)$, where $X \subset {\mathbb Z}^{n_1}$ and $Y \subset {\mathbb Z}^{n_2}$,
we say that the map  $h_G:(X, k_1, \ast_1) \to (Y, k_2, \ast_2)$ is a $DT$-$(k_1, k_2)$-group isomorphism if $h_G$ is a $DT$-$(k_1, k_2)$-isomorphic bijection with respect to the operations $\ast_1$ and $\ast_2$,
where we say that the map $h_G$ is a $DT$-$(k_1, k_2)$-isomorphic bijection with respect to the operations $\ast_1$ and $\ast_2$ if \\
(1) the map $h_G:(X, k_1) \to (Y, k_2)$ is a $(k_1, k_2)$-isomorphism and \\
(2) $h_G:(X, \ast_1) \to (Y, \ast_2)$ is a group homomorphism. \\
In the case that  $n_1 = n_2$ and $k_1 = k_2$, we say that it is a $DT$-$k_1$-group isomorphism.
\end{definition}

\begin{example} $(SC_k^{n,l}, \ast_1)$ is $DT$-$(k, 2)$-group isomorphic to $({\mathbb Z}_l, 2, +)$
\end{example}

\begin{theorem} $(SC_{k_1}^{n_1,l_1}, \ast_1)$ is  $DT$-$(k_1, k_2)$-group isomorphic to $(SC_{k_2}^{n_2,l_2}, \ast_1)$ if and only if $l_1=l_2$.
\end{theorem}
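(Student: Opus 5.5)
We prove the two implications separately. For the "if" direction, let $l:=l_1=l_2$ and write $SC_{k_1}^{n_1,l}:=(x_i)_{i\in[0,l-1]_{\mathbb Z}}$ and $SC_{k_2}^{n_2,l}:=(y_i)_{i\in[0,l-1]_{\mathbb Z}}$, each with the operation $\ast_1$ of (4.3). We define $h_G(x_i):=y_i$.

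First, $h_G$ is a bijection. It is a group homomorphism because
$$h_G(x_i\ast_1 x_j)=h_G(x_{i+j(mod\,l)})=y_{i+j(mod\,l)}=y_i\ast_1 y_j.$$
For the topological part we use the definition of a simple closed curve. In $SC_{k_1}^{n_1,l}$, the points $x_i$ and $x_j$ are $k_1$-adjacent if and only if $\vert i-j\vert=\pm1(mod\,l)$, and the same index condition characterizes $k_2$-adjacency of $y_i,y_j$. Hence $h_G$ maps $N_{k_1}(x_i,1)=\{x_{i-1},x_i,x_{i+1}\}$ onto $N_{k_2}(y_i,1)$, with indices mod $l$. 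By Proposition 2.1, both $h_G$ and $h_G^{-1}$ are continuous, so $h_G$ is a $(k_1,k_2)$-isomorphism. By Definition 9, $h_G$ is then a $DT$-$(k_1,k_2)$-group isomorphism.

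For the "only if" direction, suppose $h_G$ is a $DT$-$(k_1,k_2)$-group isomorphism. By Definition 9(1), $h_G$ is a $(k_1,k_2)$-isomorphism, and in particular a bijection between finite sets. Therefore
$$l_1=\sharp SC_{k_1}^{n_1,l_1}=\sharp SC_{k_2}^{n_2,l_2}=l_2.$$
The group-homomorphism condition is not even needed for this direction.

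The one point that needs care is in the "if" direction: the adjacency characterization must be applied exactly through Proposition 2.1. Neighborhoods must map onto neighborhoods, and the wrap-around adjacency between $x_{l-1}$ and $x_0$ must be checked. Both follow directly from the mod-$l$ definition of $SC_k^{n,l}$, which requires $l\geq4$ so that the three points of each neighborhood are distinct.
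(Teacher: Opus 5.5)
Your proposal is correct and follows the paper's proof: the paper also uses the index-preserving map $h(a_i)=b_i$ for the ``if'' direction, and it calls the ``only if'' direction straightforward, which your cardinality argument from the bijectivity in Definition 9(1) fills in. Beyond that, you supply the details the paper calls ``clear'': the homomorphism check under (4.3), and the check via Proposition 2.1 that neighborhoods map onto neighborhoods under the mod-$l$ adjacency of $SC_k^{n,l}$.
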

{\em Proof:} $(\Rightarrow)$: The proof is straightforward. \\
$(\Leftarrow)$ In the case of $l_1=l_2$, let
$$h: (SC_{k_1}^{n_1,l_1}:=(a_i)_{i \in [0, l_1-1]_{\mathbb Z}}, \ast_1) \to (SC_{k_2}^{n_2,l_2}:=(b_i)_{i \in [0, l_2-1]_{\mathbb Z}}, \ast_1)$$
given by
$$h(a_i)=b_i, i \in [0, l_1-1]_{\mathbb Z}.$$
Then it is clear that $h$ is a $DT$-$(k_1, k_2)$-group isomorphism. $\Box$

\begin{definition} Given two $DT$-$k_i$-rings $(X, k_1, \ast_1, \star_1)$ and $(Y, k_2, \ast_2, \star_2)$,
we say that the map  $h_R: (X, k_1, \ast_1, \star_1) \to (Y, k_2, \ast_2, \star_2)$ is a $DT$-$(k_1, k_2)$-ring isomorphism if $h_R$ is a $DT$-$(k_1, k_2)$-isomorphic bijection with respect to the
operations  $\ast_1$ and $\ast_2$ as well as $\star_1$ and $\star_2$, 
where we say that 
 that the map $h_R$ is a $DT$-$(k_1, k_2)$-isomorphic bijection with respect to the operations $\ast_1$ and $\ast_2$ as well as $\star_1$ and $\star_2$ if \\
(1) the map $h_R:(X, k_1) \to (Y, k_2)$ is a $(k_1, k_2)$-isomorphism, \\
(2) $h_R:(X, \ast_1) \to (Y, \ast_2)$ is a group homomorphism and\\
(3) $h_R:(X,  \star_1) \to (Y,  \star_2)$ is a homomorphism, i.e., for $x, x^\prime \in X$, $h_R(x \star_1 x^\prime)=h_R(x) \star_2 h_R(x^\prime)$.  \\
In the case that  $n_1 = n_2$ and $k_1 = k_2$, we say that it is a $DT$-$k_1$-ring isomorphism.
\end{definition}

\begin{definition} Given two $DT$-$k_i$-fields $(X, k_1, \ast_1, \star_1)$ and $(Y, k_2, \ast_2, \star_2)$,
we say that the map  $h_F: (X, k_1, \ast_1, \star_1) \to (Y, k_2, \ast_2, \star_2)$ is a $DT$-$(k_1, k_2)$-field isomorphism if $h_F$ is a $DT$-$(k_1, k_2)$-isomorphic bijection with respect to the
operations  $\ast_1$ and $\ast_2$ as well as $\star_1$ and $\star_2$, 
where we say that 
that the map $h_F$ is a $DT$-$(k_1, k_2)$-isomorphic bijection with respect to the operations $\ast_1$ and $\ast_2$ as well as $\star_1$ and $\star_2$ if \\
(1) the map $h_F:(X, k_1) \to (Y, k_2)$ is a $(k_1, k_2)$-isomorphism, \\
(2) $h_F:(X, \ast_1) \to (Y, \ast_2)$ is a group homomorphism and\\
(3) $h_F:(X,  \star_1) \to (Y,  \star_2)$ is a homomorphism, i.e., for $x, x^\prime \in X$, $h_F(x \star_1 x^\prime)=h_F(x) \star_2 h_F(x^\prime)$.  \\
In the case that  $n_1 = n_2$ and $k_1 = k_2$, we say that it is a $DT$-$k_1$-field isomorphism.
\end{definition}

Let us now introduce the notions of a pseudo $DT$-$k$-ring and a pseudo  $DT$-$k$-field that are weaker than those of a $DT$-$k$-ring and a $DT$-$k$-field, respectively.

\begin{definition} A  pseudo $DT$-$k$-ring, denoted by  $(X, k, \ast_1, \star)$, is an abelian  $DT$-$k$-group  $(X, k, \ast_1)$ combined with a ring structure on $X \subset {\mathbb Z}^n$ using the so-called multiplicative operation in $\star$ $(X, \ast_1, \star)$.
\end{definition}
After comparing Definitions 5 and 12, we can recognize that a pseudo $DT$-$k$-ring $(X, k, \ast_1, \star)$ is a  $DT$-$k$-group $(X,k, \ast_1)$ with a ring $(X, k, \ast_1, \star)$ so that
 a pseudo $DT$-$k$-ring of Definition 12 is weaker than a $DT$-$k$-ring because the condition (3) of Definition 7 is deleted in Definition 12.

\begin{example} $(SC_{k}^{n,l}, \ast_1, \star)$ is a pseudo $DT$-$k$-ring.
\end{example}

\begin{definition}  A pseudo $DT$-$k$-field, denoted by  $(X, k, \ast_1, \star)$, is a  
	$DT$-$k$-ring  $(X, k, \ast_1, \star)$ endowed with a field structure on $X \subset {\mathbb Z}^n$ using the so-called additive binary operation $\ast_1$ and the multiplicative operation $\star$.
\end{definition}

After comparing Definitions 5 and 13, we can recognize 
 a pseudo $DT$-$k$-ring is weaker than a $DT$-$k$-field because the condition (4) of Definition 8 is deleted in Definition 13.

\section{\bf Some remarks and further work}\label{s6}

The paper developed several algebraic structures based on digital objects such as a $DT$-$k$-ring, a $DT$-$k$-field, a $DT$-$k$-group isomorphism, a $DT$-$k$-ring isomorphism, a $DT$-$k$-field isomorphism, a pseudo-$DT$-$k$-ring, a pseudo-$DT$-$k$-field.
   Besides, the paper suggested several examples for a $DT$-$k$-field and classifying $DT$-$k$-groups, $DT$-$k$-rings, $DT$-$k$-fields with respect to a $DT$-$k$-group isomorphism, a $DT$-$k$-ring isomorphism, a $DT$-$k$-field isomorphism, respectively.\\
   As further work, we need to investigate some conditions for supporting the product property of  $DT$-$k$-groups. Besides, based on an adjacency from the $n$-dimensional Khalimsky topological structure \cite{HW1}, we need to develop some algebraic structures.

   \bigskip

{\bf Funding}: 
National Research Foundation of~Korea funded by the Ministry of Education, Science and Technology (2019R1I1A3A03059103)

 \bigskip

{\bf Conflicts of Interest}: The author declares no conflict of interest.

 \bigskip
 
{\bf Data availability}: The author confirms that the data supporting the findings of this study are available within this article.

\end{document}